\theoremstyle{plain}
\newtheorem{thm}{\protect\theoremname}
\newtheorem{prop}[thm]{\protect\propositionname}
\newtheorem{lem}[thm]{\protect\lemmaname}
\theoremstyle{definition}
\newtheorem{defn}[thm]{\protect\definitionname}
\newtheorem{example}[thm]{\protect\examplename}
\numberwithin{thm}{section}
\providecommand{\corollaryname}{Corollary}
\providecommand{\definitionname}{Definition}
\providecommand{\examplename}{Example}
\providecommand{\lemmaname}{Lemma}
\providecommand{\propositionname}{Proposition}
\providecommand{\theoremname}{Theorem}
\providecommand{\sectionname}{Section}
\newcommand{\Z}{\mathbb Z}
\newcommand{\R}{\mathbb R}
\newcommand{\interior}{\operatorname{int}}
\newcommand{\id}{\operatorname{id}}
\newcommand{\im}{\operatorname{im}}
\newcommand{\tr}{\operatorname{tr}}
\newcommand{\Ric}{\operatorname{Ric}}
\newcommand{\Riem}{\operatorname{Riem}}
\newcommand{\D}{\mathfrak D}
\newcommand{\mcM}{\mathcal M}
\newcommand{\M}{\,\overline{\!M}}
\newcommand{\Mprime}{\,\overline{\!M'}}
\newcommand{\TdM}{\partial T\M}
\newcommand{\TdMprime}{\partial T\Mprime}
\newcommand{\oTM}{\widetilde TM}
\newcommand{\oNM}{\widetilde NM}
\newcommand{\oNpM}{\widetilde N_pM}
\newcommand{\NdM}{\partial N\M}
\newcommand{\q}[2]{#1\big/\!\!#2}
\newcommand{\qextraspace}[2]{#1\big/#2}
\newcommand{\stepheader}[1]{\vspace{1em}\noindent\textbf{#1}\\*[.2em]}
\vref@addto\extrasenglish{%

}
\begin{document}

\title[Broken causal lens rigidity and sky shadow rigidity]{Broken causal lens rigidity and sky shadow rigidity of Lorentzian manifolds}
\author{Eric Larsson}
\address{
	Department of Mathematics\\
	KTH Royal Institute of Technology\\
	SE-100 44, Stockholm\\
	Sweden
}
\email[Eric Larsson]{ericlar@kth.se}

\maketitle

\begin{abstract}
We prove that the topology, smooth structure, and metric of a compact Lorentzian manifold with boundary is uniquely determined by data at the boundary. The data consists of the lengths and directions of future-directed once-broken geodesics connecting points on the boundary, which are first timelike and then lightlike.
This requires the strong causality condition and a weak convexity assumption, but it holds without any assumptions about conjugate points.
With an additional convexity assumption we prove the analogous statement for future-directed once-broken timelike geodesics.

If there are no conjugate points and lightlike geodesics never refocus, the analogous data using lightlike geodesics and once-broken lightlike geodesics may be used to reconstruct the manifold up to a conformal factor. This is a corollary of a result which shows that the conformal class is determined by the collection of sets of future-directed lightlike vectors at the boundary which give geodesics which all intersect in a single point. 
\end{abstract}

\setcounter{tocdepth}{1}
\tableofcontents

\section{Introduction}
Consider a compact region \(\M\) in a semi-Riemannian manifold. Suppose that we, from outside of \(\M\), are allowed to send test particles following geodesics into \(\M\) and observe where and in which direction they exit \(\M\). How much of the geometry of \(\M\) can we determine? This is the problem of \enquote{scattering rigidity}, and the data gathered from such observations is called \enquote{scattering data}. If the lengths of the geodesics is also part of the data, then the data is called \enquote{lens data}. There are several results on the problems of scattering rigidity and lens rigidity in Riemannian signature (see~\cite[Section 1.2]{CrokeWen15} for a list with references), and a few results in Lorentzian signature (see~\cite{AnderssonDahlHoward96}).

We will consider similar data, where the test particles are allowed to change direction once (so that they follow broken geodesics with at most one breakpoint). We call such data \enquote{broken scattering data}, or \enquote{broken lens data} if the data also contains the lengths of the geodesics. The problem of reconstructing a Riemannian manifold from broken lens data was considered by Kurylev, Lassas, and Uhlmann in~\cite{KurylevLassasUhlmann10}. They proved that the broken lens data\footnote{What we call broken lens data is in the terminology of~\cite{KurylevLassasUhlmann10} instead called \enquote{the broken scattering relation}. We choose our terminology to agree with the terminology used for instance by Croke and Wen and by Andersson, Dahl, and Howard; see~\cite{CrokeWen15},~\cite{AnderssonDahlHoward96} and the references contained therein.} of a compact, connected Riemannian manifold with nonempty boundary determines the isometry type of the manifold. 
We will consider different versions of the problem for Lorentzian manifolds. We choose to consider only geodesic segments which are future-directed and causal.
This restriction makes the problem very different from the Riemannian case. For instance it excludes broken geodesics which consist of following a single geodesic segment twice in opposite directions. Such broken geodesics were used in~\cite{KurylevLassasUhlmann10}.

\begin{figure}
 \centering{}
 \begin{overpic}[width=0.5\textwidth]{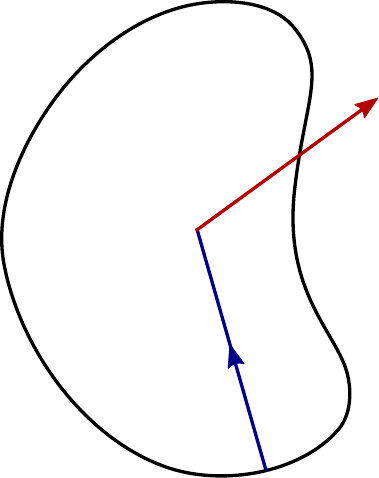}
  \put (17,51) {\(p = \exp(t\hat\xi)\)}
  \put (44,25) {\(\xi\)}
  \put (80,80) {\(\eta\)}
  \put (7,84) {\(\partial \M\)}
 \end{overpic}
 \caption{The triples \((\xi, t, \eta)\) with \(\xi\) timelike, \(\eta\) lightlike, and \(\hat \xi\) being the normalization of \(\xi\) to unit length related as in the picture, form the fireworks data.}
\label{fig:Fireworks data}
\end{figure}

There are still different possible versions of the problem, depending on the allowed causal type of the two parts of the broken geodesics. One option is to consider only lightlike broken geodesics. In this case, the lengths of geodesics are always zero so the lens rigidity problem is equivalent to the scattering rigidity problem. This is the version we consider in \prettyref{sec:Broken lightlike scattering rigidity}. Our solution to this problem is a corollary of the \enquote{sky shadow rigidity} discussed in \prettyref{sec:Sky shadow rigidity}.
The name \enquote{sky shadow} is inspired by the use of the word \enquote{sky} in~\cite{Low88} and~\cite[Chapter 1]{PenroseRindler1} to mean the set of lightlike geodesics through a point. A result about reconstructing the manifold structure and metric of a Lorentzian manifold from its space of skies can be found in~\cite{BautistaIbfortLafuente14}. For other questions about the space of lightlike geodesics of a Lorentzian manifold we refer to~\cite{Low06}.

Another option is to consider only timelike broken geodesics, as we do in \prettyref{sec:Broken timelike lens rigidity}. A solution to the rigidity problem with this data follows by a limiting argument from a solution to the lens rigidity problem where the data is based on broken geodesics which are first timelike and then lightlike (see \prettyref{fig:Fireworks data}). We call this latter problem \enquote{fireworks rigidity} and discuss it in \prettyref{sec:Fireworks rigidity}. The name \enquote{fireworks} refers to a possible physical interpretation: Suppose that we send in free-falling fireworks into the region \(\M\) which explode after a predetermined time, transmitting light in all directions. Then the fireworks themselves will follow timelike geodesics, and the light they transmit will follow lightlike geodesics. By observing the light at the boundary \(\partial \M\), we obtain precisely the broken lens data for once-broken geodesics which are at first timelike, and then lightlike.

Fireworks rigidity, broken timelike lens rigidity, and broken lightlike scattering rigidity could collectively be called \enquote{broken causal lens rigidity} since the geodesics involved are causal and future-directed. The broken lightlike scattering rigidity problem is included here since it is equivalent to the broken lightlike lens rigidity problem; lightlike geodesics have zero length, so including the curve length in the data gives no additional information. A similar problem, which we will not discuss further, is broken timelike scattering rigidity, where the data is similar to the broken timelike lens data but with the curve lengths omitted. 

In general, solutions to the rigidity problems we have mentioned tell us the following: \enquote{Suppose that two manifolds have the same data. Then the manifolds are necessarily the same.}
For our results, we impose rather weak restrictions on the two manifolds to be compared. 
The result in~\cite{KurylevLassasUhlmann10}, which also deals with data from broken geodesics, shares this feature, and in fact imposes only the conditions that the manifolds are compact, connected, and have nonempty boundary. 
This is in contrast with typical results about lens and scattering rigidity from unbroken geodesics. Such results are typically asymmetric in that they impose very strong conditions on one of the manifolds but not on the other. For instance,~\cite{Michel81} includes a result where one of the manifolds is a subdomain of a hemisphere. Similarly, a rigidity statement was shown in~\cite{BuragoIvanov10} (using the concept of \enquote{filling volume} from~\cite{Gromov83}) when one of the manifolds is a region in \(\R^n\) with a metric sufficiently close to the Euclidean metric. These papers deal with \enquote{boundary distance rigidity}, which is equivalent to lens rigidity when one of the manifolds is either \enquote{simple} or \enquote{strongly geodesically minimizing}. Details can be found in~\cite{Croke04}. Results about scattering rigidity often require even stronger conditions. One such scattering rigidity result is shown in~\cite{Wen15} under the assumption that the manifolds are two-dimensional and one of them is simple. This extends a previous result in~\cite{PestovUhlmann05} about lens rigidity under the same assumptions.

The corresponding problems for Lorentzian manifolds have received less attention. Lorentzian analogues of the boundary and lens rigidity results can be found in~\cite{AnderssonDahlHoward96} for Lorentzian \enquote{spacelike slabs} having spacelike boundary. Similar questions can be posed and answered for data on timelike hypersurfaces; for instance, in~\cite{LassasOksanenYang15} the restriction of the distance function to a timelike hypersurface was used to determine the \(C^\infty\)-jet of the metric on the hypersurface. 

\subsection{Main results}
For definitions of some of the terminology used here and later in the paper, see \prettyref{app:Assorted definitions}. Our first main result is \prettyref{thm:Fireworks data determines isometry class}. It tells us that if we know that a time-oriented Lorentzian manifold with boundary is strongly causal and compact, that it satisfies a weak convexity condition, and we know the fireworks data of the manifold, then we can determine the manifold up to isometry. Note that we do not assume that we know anything at all about the topology of the interior of the manifold. As a corollary we obtain \prettyref{thm:Broken timelike lens rigidity}, which says that we can determine a time-oriented Lorentzian manifold up to isometry if we know that it is strongly causal and satisfies a convexity condition, and we know its broken timelike lens data. The conditions that the manifold is compact and that the measurements are made at the boundary are not essential; the techniques used in the proof extend to some cases when the measurements are made on arbitrary \enquote{sufficiently large} hypersurfaces in manifolds which may or may not have boundary. We have not tried to make these conditions precise, and we will consider only the case when the manifold is compact and the measurements are made at the boundary.

\begin{figure}
 \centering{}
 \begin{overpic}[width=0.5\textwidth]{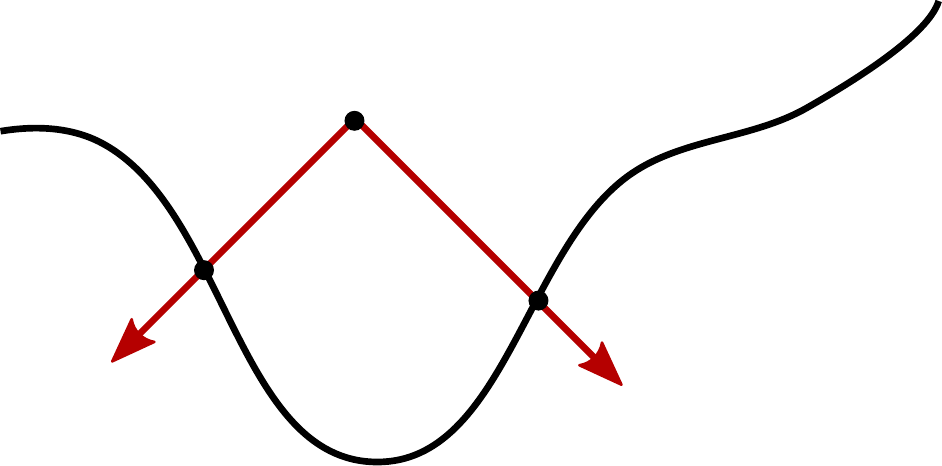}
  \put (37,40) {\(p\)}
  \put (6,10) {\(\eta_1\)}
  \put (67,9) {\(\eta_2\)}
  \put (65,25) {\(\partial \M\)}
 \end{overpic}
 \caption{The figure shows a region in the \((1+1)\)-dimensional Minkowski plane. The past sky shadow of \(p\) is the set of all nonzero vectors parallel to the lightlike vectors \(\eta_1\) and \(\eta_2\).}
\label{fig:Past sky shadow}
\end{figure}

\begin{figure}
 \centering{}
 \begin{overpic}[width=0.5\textwidth]{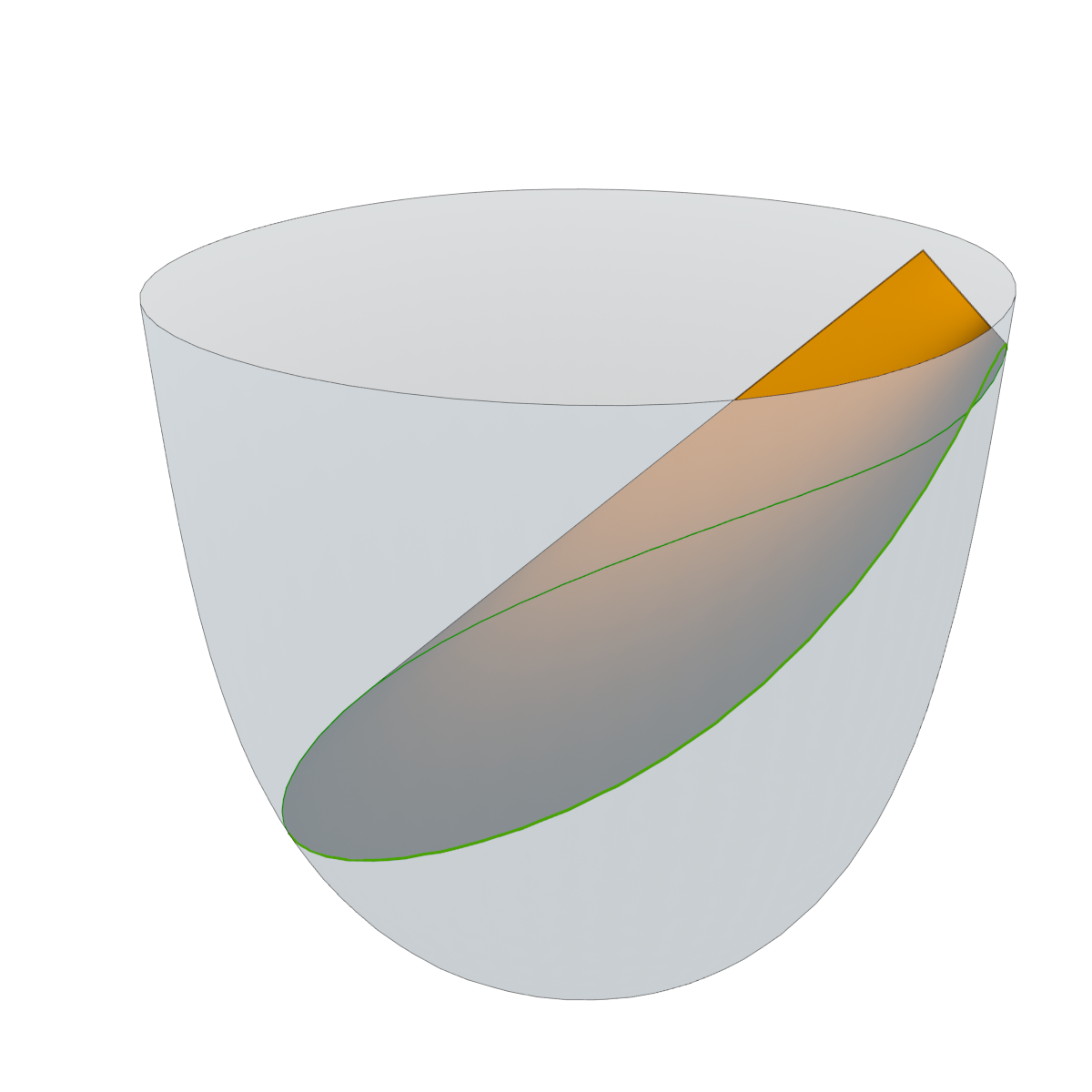}
  \put (85,79) {\(p\)}
  \put (62,26) {\(\Sigma\)}
  \put (18,48) {\(\partial \M\)}
 \end{overpic}
 \caption{The figure shows a region in the \((2+1)\)-dimensional Minkowski space. The past sky shadow of \(p\) is the set of all nonzero lightlike vectors based at the curve \(\Sigma\) and tangent to the cone.}
\label{fig:Past sky shadow 3D}
\end{figure}

Our second main result is \prettyref{thm:Past sky shadow data determines conformal class}, concerning what we call \enquote{sky shadow rigidity}. Following~\cite{Low88}, by a \enquote{sky} we mean the set of all lightlike geodesics through a point. The \enquote{sky shadow} of a point is the set of nonzero tangents of these geodesics which are based at the boundary of the manifold. More specifically, the \enquote{past sky shadow} (see \prettyref{fig:Past sky shadow} and \prettyref{fig:Past sky shadow 3D}) of a point \(p\) consists of all those lightlike vectors at the boundary such that the future-directed geodesics they generate pass through the point \(p\). This is similar to the \enquote{light observation sets} used in~\cite{KurylevLassasUhlmann15}, with the difference that the sky shadow is a subset of the tangent bundle instead of the manifold itself, and that we consider sky shadows on the boundary while the light observation sets are defined by intersections with an open neighborhood of a timelike geodesic segment. (See~\cite[Theorem 1.2]{KurylevLassasUhlmann15} for a reconstruction theorem involving these light observation sets.) The question is then, if we know the collection of all past sky shadows for a Lorentzian manifold with boundary, can we determine the manifold and the conformal class of the metric? (Since the data is invariant under conformal changes of the metric, we cannot hope to determine more than this.) We answer this question affirmatively in \prettyref{thm:Past sky shadow data determines conformal class}, under the condition that the manifold is known to be strongly causal, has sufficiently many lightlike geodesics transverse to its boundary, and has no lightlike geodesics with conjugate points. As a corollary we obtain \prettyref{thm:Broken lightlike scattering rigidity}, which tells us, under similar conditions, that the broken lightlike scattering data together with the unbroken lightlike scattering data is sufficient to determine the manifold and conformal class.
Again, the conditions that the manifold is compact and that the measurements are made at the boundary are not essential, but for simplicity we will only consider this case.

\subsection{Reconstruction of smooth structures}\label{sec:Reconstruction of smooth structures}
As part of the proofs, we will need to reconstruct the topology and smooth structure of a manifold from a set of data. A common way of reconstructing a smooth structure is to reconstruct suitable smooth coordinates (see for instance~\cite[Section 3.8]{KatchalovKurylevLassas} and~\cite[Section 5.1]{KurylevLassasUhlmann15}). We will use a different method. We first use the data to construct a smooth manifold \(\Omega\) and an appropriately chosen equivalence relation \(\sim\) on \(\Omega\). (The equivalence relation need not a priori make \(\q{\Omega}{\sim}\) a smooth manifold.) Given a smooth manifold \(M\) which realizes the data in question, we then construct a smooth surjective submersion \(\Theta \colon \Omega \to M\) which is such that it descends to a bijection \(\tilde\Theta \colon \q{\Omega}{\sim} \to M\). The important observation is that if there exists a smooth structure on \(\q{\Omega}{\sim}\) which makes the quotient map \(\Omega \to \q{\Omega}{\sim}\) a smooth surjective submersion, then this smooth structure is unique. This then determines the smooth structure on \(M\) in the following way: Suppose that we have two smooth manifolds \(M\) and \(M'\) realizing the same set of data. Then we get a commutative diagram as follows:
\[\begin{tikzpicture}[every node/.style={midway}]
\matrix[column sep={5em,between origins}, row sep={2em}] at (0,0) {
	; &
	\node(Omega) {\(\Omega\)}; &
	; \\

	\node (M) {\(M\)}; &
	\node(Quotient) {\(\q{\Omega}{\sim}\)}; &
	\node (Mp) {\(M'.\)}; \\
};

\draw[->] (Omega) -- (Mp)			 node[anchor=west] {\(\ \Theta'\)};
\draw[<-] (Mp)		 -- (Quotient) node[anchor=north] {\(\tilde\Theta'\)};

\draw[->] (Omega) -- (Quotient)  node[anchor=east] {};

\draw[->] (Omega) -- (M)			 node[anchor=east] {\(\Theta\ \)};
\draw[<-] (M)		 -- (Quotient) node[anchor=north] {\(\tilde\Theta\)};
\end{tikzpicture}\]
Since there is at most one smooth structure on \(\q{\Omega}{\sim}\) which makes the quotient map a surjective submersion, the smooth structure on \(\q{\Omega}{\sim}\) given by the bijection \(\tilde \Theta\) must be diffeomorphic to the one given by the bijection \(\tilde \Theta'\). Hence \(\tilde\Theta' \circ \tilde\Theta^{-1}\) is a diffeomorphism. If the maps \(\Theta\) and \(\Theta'\) are suitably chosen, this construction also allows us to determine additional structure, such as metrics or conformal types of metrics on the manifolds. 
Of course, the equivalence relation \(\sim\) is uniquely determined by \(\Theta\). However, the point is that \(\sim\) should be possible to characterize completely in terms of the data, whereas \(\Theta\) necessarily depends on \(M\).

The method described above will be used explicitly in the proof of the first main result, \prettyref{thm:Fireworks data determines isometry class}. The set \(\Omega\) will consist of vectors and curve lengths, and the map \(\Theta\) will be the exponential map. In \prettyref{thm:Past sky shadow data determines conformal class} we will use the same idea but not explicitly construct an equivalence relation. The set \(\Omega\) will consist of null Weingarten maps (as defined in \prettyref{app:Null geometry}) of sky shadows, and the map \(\Theta\) will be defined by solving the Riccati equation for the null Weingarten map and using the solution to identify the point from which the sky shadow originated.

\section{Fireworks rigidity}
\label{sec:Fireworks rigidity}
We will now define the concept of fireworks data, which should be thought of as a set of triples which represent broken geodesics which are first timelike and then lightlike. 
We use the notation \(\TdM\) to denote the boundary of the tangent bundle of a manifold \(\M\) with boundary; in other words \(\TdM\), which could also be denoted \(\left.T\M\right|_{\partial \M}\), is the restriction of the tangent bundle of \(\M\) to the boundary \(\partial \M\). Note that this is different from the tangent bundle of \(\partial \M\) itself, which is denoted by \(T\partial \M\).
\begin{defn}
Let \(\M\) be a time-oriented Lorentzian manifold with boundary \(\partial \M\). The \emph{fireworks data} of \(\M\), as illustrated in \prettyref{fig:Fireworks data}, is the set of triples \((\xi, t, \eta) \in \TdM \times \R \times \TdM\) such that
\begin{itemize}
\item \(\xi\) is future-directed and timelike,
\item \(\eta\) is future-directed and lightlike,
\item after distance \(t\), the geodesic starting with \(\xi\) intersects the geodesic ending with \(\eta\).
\end{itemize}
The geodesics are allowed to intersect \(\partial \M\) in arbitrarily many points.
We say that \(\M\) and \(\Mprime\) have \emph{isomorphic} fireworks data if \(\TdM \to \partial \M\) and \(\TdMprime \to \partial \Mprime\) are isomorphic as smooth fiber bundles, by an isomorphism which identifies the fireworks data of \(\M\) with the fireworks data of \(\Mprime\)\!.
\end{defn}

In this section and in later sections we will use a function \(T_\partial\) which for a given vector tells us how long, in parameter time, it takes for the geodesic starting with that vector to intersect the boundary. The following lemma tells us that this function is smooth on a certain open submanifold of the tangent bundle.
\begin{lem}
\label{lem:Smooth time to boundary}
Let \(\M\) be a semi-Riemannian manifold with boundary and let \(M\) be its interior. Let \(\oTM\) denote the set of tangent vectors based in \(M\) which are initial velocities of inextendible half-geodesics which intersect the boundary and do so transversely. Define \(T_\partial \colon \oTM \to \R\) by letting \(T_\partial(\eta)\) be the parameter time after which the geodesic starting with \(\eta\) intersects the boundary. (By definition of \(\oTM\), there is precisely one such parameter time.) Then \(\oTM\) is an open submanifold of \(TM\) and \(T_\partial\) is smooth.
\end{lem}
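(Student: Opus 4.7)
The plan is to reduce the statement to an application of the implicit function theorem applied to a boundary defining function composed with the geodesic flow. Fix \(\eta_0 \in \oTM\), let \(\gamma_\eta\) denote the geodesic with initial velocity \(\eta\), set \(t_0 = T_\partial(\eta_0)\) and \(p_0 = \gamma_{\eta_0}(t_0) \in \partial \M\). First I would locally extend \(\M\) near \(p_0\) to an open semi-Riemannian manifold \(\widetilde{\M}\) without boundary in which \(\partial \M\) is a smooth embedded hypersurface. Smooth dependence of solutions of the geodesic ODE on initial data then gives that the flow \((\eta, t) \mapsto \gamma_\eta(t)\) is a smooth map into \(\widetilde{\M}\) on some neighborhood of \((\eta_0, t_0)\) in \(TM \times \R\). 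Pick a smooth local defining function \(f\) for \(\partial \M\) near \(p_0\), normalized so that \(f<0\) on the \(M\)-side.

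Next, consider \(F(\eta, t) = f\bigl(\gamma_\eta(t)\bigr)\), which is smooth on a neighborhood of \((\eta_0, t_0)\). We have \(F(\eta_0, t_0) = 0\) and
\[\partial_t F(\eta_0, t_0) = df_{p_0}\bigl(\dot \gamma_{\eta_0}(t_0)\bigr) \neq 0,\]
since \(\dot \gamma_{\eta_0}(t_0)\) is by assumption transverse to \(\partial \M = \{f=0\}\). The implicit function theorem yields a neighborhood \(U \subset TM\) of \(\eta_0\) and a smooth function \(T \colon U \to \R\) with \(T(\eta_0) = t_0\) and \(F(\eta, T(\eta)) = 0\) for every \(\eta \in U\).

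The remaining task is to show that, after possibly shrinking \(U\), we have \(U \subset \oTM\) and \(T|_U = T_\partial|_U\); this simultaneously proves openness of \(\oTM\) and smoothness of \(T_\partial\). Transversality at \(T(\eta)\) holds for \(\eta\) near \(\eta_0\) by continuity of \(\eta \mapsto \dot \gamma_\eta(T(\eta))\). The main subtlety is to verify that \(T(\eta)\) is in fact the first time at which \(\gamma_\eta\) meets \(\partial \M\). For any fixed \(\delta > 0\) the set \(\gamma_{\eta_0}([0, t_0 - \delta])\) is a compact subset of \(M\), so continuous dependence of the geodesic flow on initial data gives \(\gamma_\eta([0, t_0 - \delta]) \subset M\) for all \(\eta\) sufficiently close to \(\eta_0\). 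On a small fixed neighborhood of \(t_0\) the function \(t \mapsto f(\gamma_{\eta_0}(t))\) is strictly monotone, and by \(C^1\)-closeness the same holds for \(t \mapsto f(\gamma_\eta(t))\) when \(\eta\) is close to \(\eta_0\); therefore \(T(\eta)\) is the unique zero of this function on that neighborhood, and \(\gamma_\eta\) lies on the \(M\)-side of \(\partial \M\) just before \(T(\eta)\). Hence \(\eta \in \oTM\) and \(T_\partial(\eta) = T(\eta)\) on a sufficiently small \(U\), which is the desired conclusion. The only nonroutine point is this last verification that no earlier boundary crossing occurs; the rest is a standard implicit function theorem argument.
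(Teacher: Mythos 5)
Your proposal is correct and follows essentially the same route as the paper: both locally extend $\M$ past the boundary, apply the implicit function theorem to a boundary defining function composed with the geodesic flow, and then verify that the resulting smooth function is in fact $T_\partial$ and that a neighborhood of $\eta_0$ lies in $\oTM$. The only real difference is in the last verification step, where the paper extends $T_\partial$ to $[0,\infty]$ on all of $TM$ and uses lower semi-continuity (coming from closedness of $\exp^{-1}(\partial\M)$) together with local uniqueness of the implicitly defined root, whereas you rule out an earlier crossing by combining compactness of $\gamma_{\eta_0}([0,t_0-\delta])\subset M$ with local strict monotonicity of $t \mapsto f(\gamma_\eta(t))$ near $t_0$; both are valid, and the distinction is cosmetic rather than structural.
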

\begin{proof}
First extend \(T_\partial\) to all of \(TM\) by letting \(T_\partial(\eta)\) be the parameter time after which the geodesic starting with \(\eta\) intersects \(\partial \M\) for the \emph{first} time. We let \(T(\eta) = \infty\) if the geodesic does not intersect \(\partial \M\) at all.
Extend \(\M\) to a semi-Riemannian manifold without boundary \(\mcM\) and let \(\exp \colon TM \to \mcM\) denote the (partially defined) exponential map of this larger manifold. 
Fix \(\eta \in \oTM\) and let \(p = \exp(T_\partial(\eta)\eta) \in \partial \M\). Choose a neighborhood \(U \subset \mcM\) of \(p\) foliated by level sets of a smooth submersion \(F \colon U \to \R\) such that \(F^{-1}(0) = U \cap \partial \M\). 
This is possible since \(\partial \M\) is an embedded hypersurface in \(\mcM\).
For \((\xi, t)\) in a neighborhood of \((\eta, T_\partial(\eta)) \in TM \times \R\), let
\[\Phi(\xi, t) = F(\exp(t\xi)).\]
(There is an open set where this is well-defined, since \(\exp\) is continuous so that \(\exp^{-1}(U)\) is open.)
Consider the equation
\[\Phi(\xi, t) = 0.\]
%\[F(\exp(t\xi)) = 0\]
%for \(t \in \R\) and \(t\xi \in \exp^{-1}(U)\). 
Since \(F\) is a submersion and the geodesic starting with \(\eta\) intersects \(\partial \M\) transversely, we have
\[
\frac{\partial \Phi}{\partial t}(\eta, T_\partial(\eta))
=
\left.\frac{d}{d t}\right|_{t = T_\partial(\eta)} F(\exp(t\eta))
\neq
0.
\]
Hence, by the implicit function theorem \cite[Theorem C.40]{LeeSmoothManifolds} there is a smooth function \(t\) defined on a neighborhood \(V\) of \(\eta\) such that
\[
\Phi(\xi, t(\xi))
=
0
\text{ for } \xi \in V,
\]
\[t(\eta) = T_\partial(\eta).\]
Moreover, this function is locally unique in the sense that if \(\epsilon > 0\) is sufficiently small then for each \(\xi \in V\) there is at most one \(t\) with \(|t - t(\eta)| < \epsilon\) such that \(F(\exp(t\xi)) = 0\).
Fix such an \(\epsilon\). 

We will now prove that, after possibly shrinking \(V\), the function \(T_\partial\) agrees with \(t\) on \(V\). 
Note that
\[T_\partial(\xi) = \inf \{t \geq 0 \colon t\xi \in \exp^{-1}(\partial \M)\}\]
which means that \(T_\partial(\xi) \leq t(\xi)\). Moreover, \(t\) is continuous so we may shrink \(V\) so that \(|t(\xi) - t(\eta)| < \epsilon\) for all  \(\xi \in V\). Hence \(T_\partial(\xi) < t(\eta) + \epsilon\) for all \(\xi \in V\).
Since the set \(\partial \M\) is closed and \(\exp\) is continuous, the set \(\exp^{-1}(\partial \M)\) is also closed and hence \(T_\partial\) is lower semi-continuous. 
This means that we may shrink \(V\) to ensure that \(T_\partial(\xi) > T_\partial(\eta) - \epsilon\) for all \(\xi \in V\). Since \(t(\eta) = T_\partial(\eta)\) we have now obtained the inequalities
\[t(\eta) - \epsilon < T_\partial(\xi) < t(\eta) + \epsilon\]
for all \(\xi \in V\). Note that both \(t = t(\xi)\) and \(t = T_\partial(\xi)\) are such that \(|t - t(\eta)| < \epsilon\) and \(F(\exp(t\xi)) = 0\). There cannot be more than one such value of \(t\) by our choice of \(V\) and \(\epsilon\), and hence \(t(\xi) = T_\partial(\xi)\). This means that \(T_\partial\) agrees with \(t\) on \(V\), so that \(T_\partial\) is smooth on \(V\).

If we can show, possibly after shrinking \(V\) further, that \(V \subseteq \oTM\), then we will have shown that \(\oTM\) is open and that the restriction of \(T_\partial\) to \(\oTM\) is smooth. Let \(\Phi_g\) denote geodesic flow. Explicitly, \(\Phi_g(t, \xi) = \dot\gamma_\xi(t)\) where \(\gamma_\xi\) is the geodesic with \(\dot\gamma_\xi(0) = \xi\). Then the map \(\alpha \colon \xi \mapsto \Phi_g(T_\partial(\xi), \xi)\) defined on \(V\) is smooth. Note that, for \(\xi \in V\), it holds that \(\alpha(\xi) \in T\partial \M\) if and only if \(\xi \notin \oTM\). The inverse image of \(T\partial \M\) under the smooth map \(\alpha\) is closed since \(\partial \M\) is closed, and it does not contain \(\eta\) since \(\eta \in \oTM\). Hence \(V \setminus \alpha^{-1}(T\partial \M)\) is an open neighborhood of \(\eta\) which is completely contained in \(\oTM\). We have shown that \(\oTM\) is open and that \(T_\partial \colon \oTM \to [0, \infty)\) is smooth.
\end{proof}

The proof of \prettyref{thm:Fireworks data determines isometry class} will involve an equivalence relation \(\asymp\). To prove one of the properties of that equivalence relation we will use the following point-set topology result.
\begin{lem}
\label{lem:Topological lemma}
Let \(\Gamma\subseteq\left[0,1\right]\times\R\) be compact. Define an equivalence relation \(\approx\) on \(\Gamma\) by letting \(\left(s,t\right)\approx\left(s',t'\right)\) if and only of \(s = s'\). Suppose that the restriction to \(\Gamma\) of the projection \(\pi_{1}\colon\left[0,1\right]\times\R\to\left[0,1\right]\) is surjective. Then the quotient space \(\q{\Gamma}{\approx}\) is connected.
\end{lem}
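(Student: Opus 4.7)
The plan is to argue by contradiction. Suppose $\q{\Gamma}{\approx}$ is disconnected. Pulling back a decomposition into two nonempty disjoint open sets under the quotient map $\pi\colon \Gamma \to \q{\Gamma}{\approx}$ yields $\Gamma = \tilde A \sqcup \tilde B$, where $\tilde A, \tilde B$ are nonempty, disjoint, and open in $\Gamma$, and each is a union of fibers of $\pi_1|_\Gamma$ (that is, saturated under $\approx$). Projecting down, $S_A = \pi_1(\tilde A)$ and $S_B = \pi_1(\tilde B)$ form a disjoint partition of $[0,1]$ into two nonempty subsets.

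Because $[0,1]$ is connected, the sets $S_A$ and $S_B$ cannot both be open in $[0,1]$; without loss of generality $S_A$ fails to be open, so there exist $\hat s \in S_A$ and a sequence $s_n \in S_B$ with $s_n \to \hat s$. For each $n$ select $t_n$ with $(s_n, t_n) \in \tilde B$. If a subsequence of $(s_n, t_n)$ can be made to converge to a point $(\hat s, t^\ast) \in [0,1] \times \R$, then closedness of $\Gamma$ forces $(\hat s, t^\ast) \in \Gamma$. Since $\hat s \in S_A$, the entire fiber over $\hat s$ is contained in $\tilde A$, so $(\hat s, t^\ast) \in \tilde A$; openness of $\tilde A$ in $\Gamma$ then places $(s_n, t_n) \in \tilde A$ for all large $n$, contradicting the choice $(s_n, t_n) \in \tilde B$.

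The main obstacle is producing such a convergent subsequence, which amounts to choosing the $t_n$ in a bounded $t$-range. The strategy is to exploit that each point of the fiber $\Gamma \cap (\{\hat s\} \times \R)$ admits a product open neighborhood in $[0,1] \times \R$ whose intersection with $\Gamma$ lies in $\tilde A$; for $n$ large, the fiber $\Gamma \cap (\{s_n\} \times \R) \subseteq \tilde B$ must avoid each such neighborhood, but together with surjectivity of $\pi_1|_\Gamma$ and closedness of $\Gamma$ this configuration should corner the nearby fibers into a bounded $t$-range so that a bounded selection of $t_n$ exists and Bolzano–Weierstrass applies. This boundedness step, which relies on the interplay between closedness of $\Gamma$ and the local openness of $\tilde A$, is the delicate part of the argument.
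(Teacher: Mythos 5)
Your approach is essentially the same as the paper's (argue by contradiction, project to \([0,1]\), pick a boundary point \(\hat s\), attempt a convergent selection over the fiber), and you have correctly located the crux: producing a bounded selection of the \(t_n\). That gap is genuine and cannot be closed from the stated hypotheses, because the lemma is in fact false as written. Consider \(\Gamma = \bigl(\{0\}\times[1,\infty)\bigr) \cup \bigl\{(s, 1/s) \mid s \in (0,1]\bigr\}\): this set is closed in \([0,1]\times\R\) and \(\pi_1|_\Gamma\) is surjective, but the vertical ray and the hyperbola are each open and closed in \(\Gamma\) and each is saturated under \(\approx\), so their images \(\{0\}\) and \((0,1]\) are disjoint nonempty open sets separating \(\q{\Gamma}{\approx}\). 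This realizes exactly your worry: as \(s \to 0^+\) the fiber over \(s\) escapes to \(t = +\infty\), and neither closedness of \(\Gamma\) nor openness of \(\tilde A\) in \(\Gamma\) constrains the behavior near \(t = \pm\infty\), so the interplay you hoped to exploit cannot produce boundedness.

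The paper's own proof has the same gap. It shows that no point of \(\{\hat s\}\times\R\) is a limit point of \(V\cap\Gamma\) and then concludes that \(\hat s\) is not a boundary point of \(\pi_1(U\cap\Gamma)\); in the example above no point of \(\{0\}\times\R\) lies in the closure of \(V\cap\Gamma\), yet \(0\) is a boundary point of \(\pi_1(U\cap\Gamma) = \{0\}\). The implicit step needs \(\pi_1\) to be a closed map, which it is not on \([0,1]\times\R\). Adding the hypothesis that \(\Gamma\) is bounded (equivalently, compact, since it is already assumed closed) repairs both your proof and the paper's: with compactness your Bolzano--Weierstrass selection of the \(t_n\) goes through immediately and the resulting limit point gives the desired contradiction. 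In the application, \(\Gamma\) records the boundary-intersection times of a compact family of unit-speed inextendible causal geodesics from a fixed interior point; compactness and strong causality of \(\M\) should be invoked there to show the relevant affine-parameter domain is uniformly bounded, delivering the compactness of \(\Gamma\) that the argument actually requires.
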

\begin{proof}
The quotient map \(\Gamma \to \q{\Gamma}{\approx}\) is continuous, so \(\q{\Gamma}{\approx}\) is compact.
Then \(\pi_1 \colon \Gamma \to [0, 1]\) descends to a continuous bijection from the compact space \(\q{\Gamma}{\approx}\) to the Hausdorff space \([0, 1]\), so it is a homeomorphism.
Hence \(\q{\Gamma}{\approx}\) is homeomorphic to \([0, 1]\), and in particular connected.
\end{proof}

We are now ready to prove that the fireworks data of a compact time-oriented Lorentzian manifold with boundary determines the manifold uniquely when the strong causality condition and a weak convexity condition are imposed. The convexity condition is the condition that each point in the interior of the manifold is the future endpoint of a past-inextendible timelike geodesic which intersects the boundary transversely. The following sketch of a construction illustrates which kind of manifolds may be excluded by this condition.
Consider the outside of a coordinate sphere \(t^2 + x_1^2 + \cdots + x_n^2 = r^2\) in the Minkowski spacetime with standard coordinates \((t, x_1, \ldots, x_n)\), and consider a family of future-directed geodesics which are initially tangent to this sphere and whose initial velocities lie in the planes spanned by the coordinate radial direction and the \(\partial_t\) direction. They will initially look like in the left picture in \prettyref{fig:Counterexample with tangent geodesics}. After a parameter time which is long compared to the size of the removed ball the geodesics will look like in the right picture in \prettyref{fig:Counterexample with tangent geodesics}: They are very close to originating from a single point. It seems plausible that the metric can be perturbed slightly to make the geodesics appear to have originated exactly from a single point. There are Lorentzian manifolds (for instance the Anti-de Sitter spacetime) where all timelike geodesics from one point refocus in another point. Using a similar metric, it seems plausible that one can make the family of geodesics we are considering refocus in a single point \(p\). This means that all past-directed timelike geodesics starting at \(p\) intersect the boundary tangentially, and the manifold fails to satisfy the hypothesis of the theorem.
Nevertheless, we do believe that the theorem is true even without this hypothesis since the set of points where it fails is small.

\begin{figure}
	\centering{}
	\includegraphics[width=0.45\textwidth]{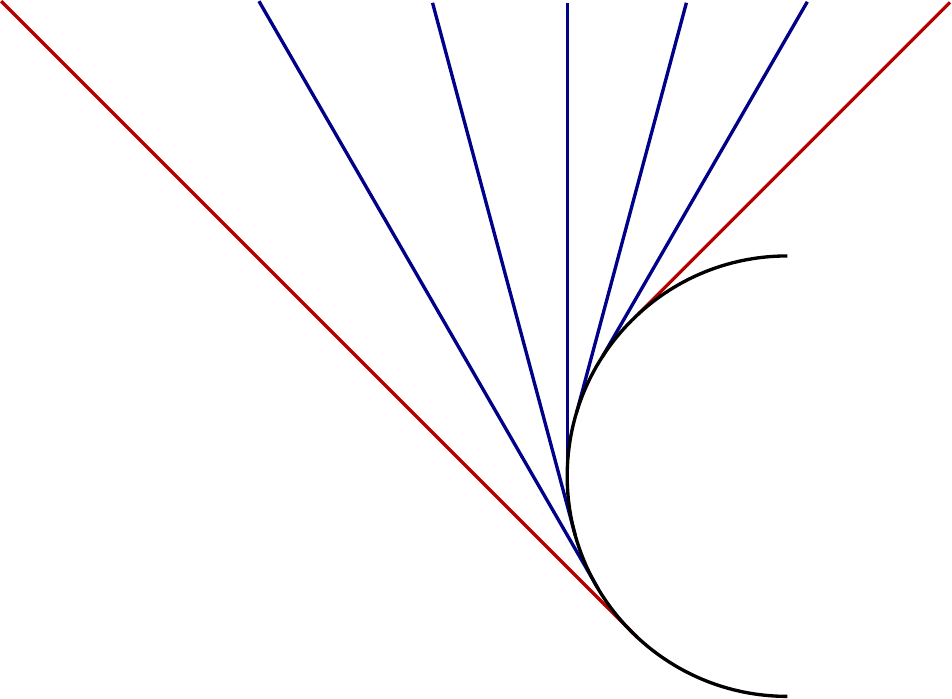}
	\includegraphics[width=0.45\textwidth]{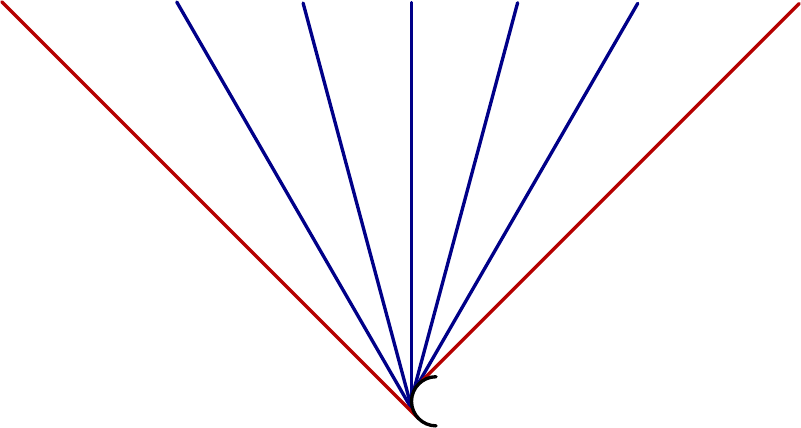}
	\caption{Geodesics tangent to a small sphere in the \((1+1)\)-dimensional Minkowski plane.}
\label{fig:Counterexample with tangent geodesics}
\end{figure}

\begin{thm}
\label{thm:Fireworks data determines isometry class}
Let \((\M_1, g_1)\) and \((\M_2, g_2)\) be strongly causal, compact, time-oriented Lorentzian manifolds of dimension \(n \geq 3\) with boundary. Suppose that each point \(p \in M_i\) in the interior \(M_i\) of \(\M_i\) is the future endpoint of a past-inextendible timelike geodesic which intersects \(\partial \M_i\) transversely. 
If \(\M_1\) and \(\M_2\) have isomorphic fireworks data, then they are isometric. 

In other words, the isometry type of a strongly causal, compact, time-oriented Lorentzian manifold of dimension \(n \geq 3\) with boundary with the above transversality condition for geodesics is uniquely determined by fireworks data.
\end{thm}

The proof of \prettyref{thm:Fireworks data determines isometry class} can be extended to cover some settings in which the manifold is not necessarily compact and the data is not necessarily given at the boundary. This gives rise to further complications, in particular when the manifold is noncompact and the set where the data is given is not connected. For this reason we will consider only the case of compact manifolds with boundary.

We have drawn two-dimensional illustrations for the proof, even though the case \(n = 2\) is not covered by the theorem.

\begin{proof}
The proof follows the general idea described in \prettyref{sec:Reconstruction of smooth structures}, and consists of eight steps:
\begin{enumerate}
\item\label{step:Choose Omega} Consider a smooth fiber bundle \(E \to B\) and a subset \(D \subseteq E \times [0, \infty) \times E\), and construct a certain open submanifold \(\Omega \subseteq E \times (0, \infty)\). At this point \(E\) and \(D\) may be arbitrary, but in later steps \(E\) will be \(\TdM\) and \(D\) will be fireworks data for a manifold \(\M\). The set \(\Omega\) will consist of vectors in \(\TdM\) and curve lengths, which should be thought of as representing geodesic segments.
\item\label{step:Construct equivalence relation} Construct an equivalence relation \(\sim\) on \(\Omega\).
\item\label{step:Note diffeomorphism} Note that if there is a topology and a smooth structure on the quotient \(\q{\Omega}{\sim}\) such that the quotient map \(p \colon \Omega \to \q{\Omega}{\sim}\) is a surjective submersion, then this topology and smooth structure is unique.
\item\label{step:Note metric} Note that if there is a time-oriented Lorentzian metric on the quotient such that the tangents of the curves \(t \mapsto p(\xi, t)\) form a subset with fiberwise nonempty interior of the bundle of future-directed unit timelike tangent vectors based in \(\q{\Omega}{\sim}\) (in the sense that it intersects each fiber in a subset with nonempty interior) then this metric is unique.
\item\label{step:Choose submersion} Suppose that a time-oriented Lorentzian manifold \((\M, g)\) with interior \(M\) is such that \(\TdM\) is isomorphic as a smooth fiber bundle to \(E\) by an isomorphism which identifies the fireworks data with \(D\). Choose a surjective submersion \(\Theta \colon \Omega \to M\) and a bijection \(\tilde\Theta \colon \q{\Omega}{\sim} \to M\) such that the following diagram commutes:\\
\[\begin{tikzpicture}[every node/.style={midway}]
\matrix[column sep={5em,between origins}, row sep={2em}] at (0,0) {
	; & \node(Omega) {\(\Omega\)}; \\

	\node (M) {\(M\)}; &
	\node(Quotient) {\(\q{\Omega}{\sim}.\)};\\
};

\draw[->] (Omega) -- (M)			node[anchor=south] {\(\Theta\ \)};
\draw[->] (Quotient)	-- (M) 	node[anchor=north] {\(\tilde\Theta\)};
\draw[->] (Omega) -- (Quotient) node[anchor=west] {\(p\)};
\end{tikzpicture}\]
This submersion \(\Theta\) will essentially be the exponential map.
\item\label{step:Identify timelike unit vectors} Show that this choice of \(\Theta\) is such that the tangents of the curves \(t \mapsto \Theta(\xi, t)\) form a subset with fiberwise nonempty interior of the bundle of future-directed unit timelike tangent vectors based in \(M\). 
\item\label{step:Conclude isometry of the interior} From this, conclude that the topology, smooth structure, and metric on \(M\) is uniquely determined.
\item\label{step:Conclude isometry including boundary} Apply~\cite[Theorem 5.3]{Chrusciel10} to determine the metric on all of \(\M\).
\end{enumerate}

\begin{figure}
	\centering{}
	\begin{overpic}[height=0.3\textheight]{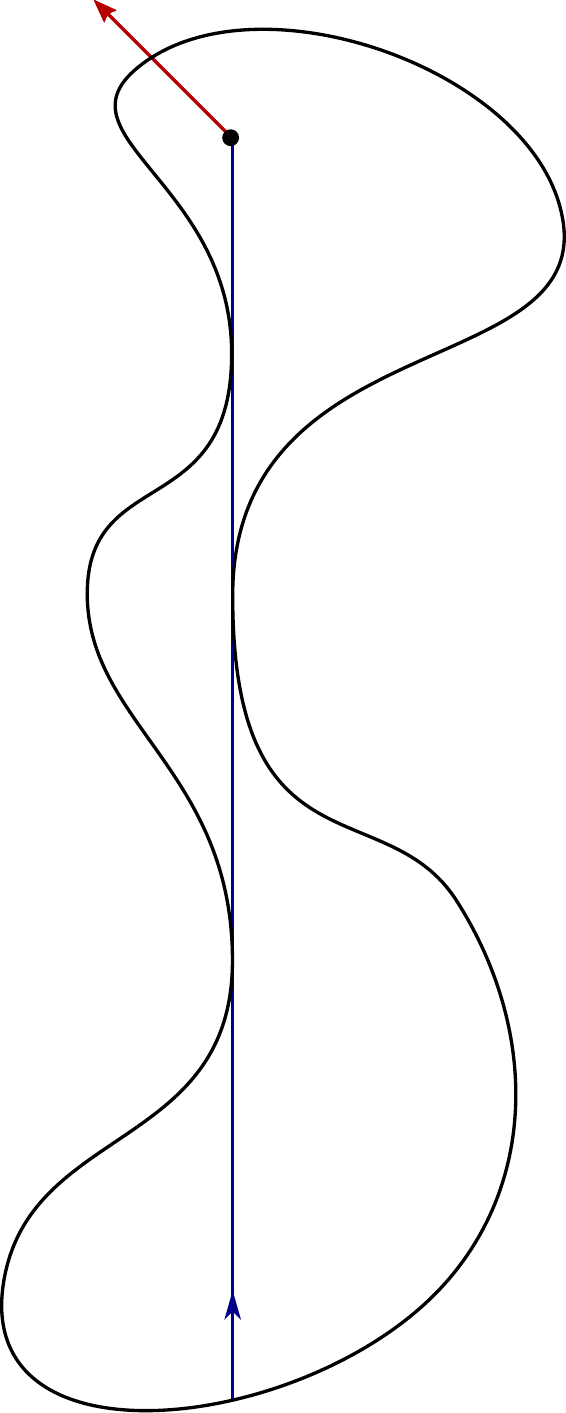}
	 \put (18,87) {\(\exp(t\xi)\)}
	 \put (30,69) {\(\partial \M\)}
	 \put (18,10) {\(\xi\)}
	\end{overpic}
	\caption{A Lorentzian manifold consisting of a compact subset of the Minkowski plane, where \(\widehat \Omega\) is not a manifold. The same phenomenon occurs in higher dimensions.}
\label{fig:Non-manifold Omega}
\end{figure}

\stepheader{Step~\ref{step:Choose Omega}}
Let \(\widehat \Omega\) be the image of \(D\) under the projection \(\pi \colon E \times \R \times E \to E \times \R\) onto the first two components,
\[\widehat \Omega = \pi(D).\]
This set \(\widehat \Omega\) is not necessarily a manifold, because of the phenomenon shown in \prettyref{fig:Non-manifold Omega}, and for some of the arguments we will need a manifold. However, the interior of \(\widehat\Omega\) is an open subset of \(E \times \R\) and so inherits a smooth manifold structure. Let \(\Omega\) be this interior:
\[\Omega = \interior \pi(D).\]
The manifold \(\Omega\) will be the object of main interest, but we will need \(\widehat \Omega\) to construct an equivalence relation \(\sim\) on \(\Omega\).

We will now say a few words about what properties the sets \(\Omega\) and \(\widehat \Omega\) have in the case when \(D\) is fireworks data of an actual strongly causal, compact, time-oriented Lorentzian manifold \(\M\) with boundary \(\partial \M\), and \(E = \TdM\). In this case, \(\Omega\) consists precisely of the pairs \((\xi, t)\) (where \(t > 0\) and \(\xi\) is timelike, future-directed, and based at \(\partial \M\)) such that \(\xi\) is transverse to \(\partial \M\) and the geodesic of length \(t\) starting with \(\xi\) does not intersect \(\partial \M\) except at its initial endpoint. 
Similarly, every element \((\xi, t) \in \widehat \Omega\) is such that the geodesic starting with \(\xi\) has length at least \(t\).

\begin{figure}
 \centering{}
 \begin{overpic}[width=0.3\textwidth]{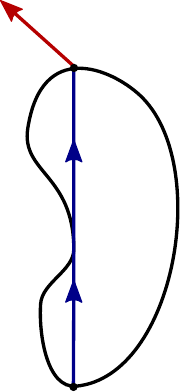}
%  \put (20,-2) {\(p\)}
  \put (21,38) {\(q\)}
  \put (20,87) {\(\substack{\begin{aligned}r&= \exp(\mathfrak t\zeta)\\[-.5em] &= \exp((\mathfrak s + \mathfrak t)\xi)\end{aligned}}\)}
  \put (-3,95) {\(\eta\)}
  \put (22,25) {\(\xi\)}
  \put (22,60) {\(\zeta\)}
 \end{overpic}
 \caption{The equivalence relation \(\asymp\) will identify \((\xi, \mathfrak s + \mathfrak t)\) and \((\zeta, \mathfrak t)\) where \(\mathfrak s\) is such that \(\exp(\mathfrak s\xi) = q\) and \(\mathfrak t > 0\) is arbitrary (though small enough for the geodesics to stay in the manifold.)}
\label{fig:Auxiliary equivalence relation}
\end{figure}

\begin{figure}
 \centering{}
 \begin{overpic}[width=0.5\textwidth]{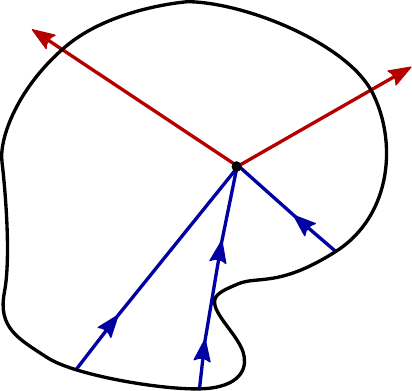}
  \put (57,59) {\(p\)}
  \put (23,21) {\(\xi_1\)}
  \put (42,12) {\(\xi_2\)}
  \put (56,35) {\(\xi_3\)}
  \put (72,44) {\(\xi_4\)}
  \put (10,89) {\(\eta_1\)}
  \put (95,81) {\(\eta_2\)}
 \end{overpic}
 \caption{Let \(t_i\) be such that \(\exp(t_i\xi_i) = p\). Let \(\Sigma = \{\lambda \eta_1, \lambda \eta_2 \mid \lambda \in \R^+\}\). Then \((\xi_1, t_1) \sim (\xi_2, t_2)\) since there is a path in \(\widehat \Omega_\Sigma\) connecting them. For the same reason \((\xi_3, t_3) \sim (\xi_4, t_4)\). Finally \((\xi_2, t_2) \sim (\xi_3, t_3)\) since they are identified by \(\asymp\). Hence the equivalence relation \(\sim\) captures the fact that \(\exp(t_1\xi_1) = \exp(t_4\xi_4)\).}
\label{fig:Equivalence relation}
\end{figure}

\stepheader{Step~\ref{step:Construct equivalence relation}}
The set \(\widehat \Omega\) can be partitioned into sets \(\widehat \Omega_\Sigma\) indexed by nonempty subsets \(\Sigma \subseteq E\), by letting
\[\widehat \Omega_\Sigma = \left\{(\xi, t) \in E \times [0, \infty) \mid (\xi, t, \eta) \in D \iff \eta \in \Sigma\right\}.\]
In other words, \(\widehat \Omega_\Sigma\) consists of those pairs \((\xi, t)\) such that the inverse image \(\pi^{-1}(\xi, t)\) is precisely \(\{(\xi, t)\} \times \Sigma\).
Of course, \(\widehat \Omega_\Sigma\) will be empty for most choices of \(\Sigma\). 
Equip each set \(\widehat \Omega_\Sigma\) with the subspace topology. Let \(\sim\) be the equivalence relation on \(\Omega\) defined as follows:
\begin{itemize}
\item Define an auxiliary equivalence relation \(\asymp\) on \(\widehat \Omega\) by letting \(\left(\xi,t\right) \asymp \left(\zeta,s\right)\) if for every \(\eta \in E\) and every \(\tau \in [0, \infty)\) it holds that
\[\left(\xi, t + \tau, \eta\right) \in D \iff \left(\zeta, s + \tau, \eta\right) \in D.\]
See \prettyref{fig:Auxiliary equivalence relation} for an illustration.
\item Define \(\sim\) on \(\Omega\) by letting \(\left(\xi, t\right) \sim \left(\zeta, s\right)\) if there is some \(\Sigma \subseteq E\) such that \((\xi,t)\) and \((\zeta,s)\) both belong to \(\widehat \Omega_\Sigma\) and such that they belong to the same connected component of \(\q{\widehat \Omega_\Sigma}{\asymp}\). 
See \prettyref{fig:Equivalence relation} for an illustration.
(Here \(\q{\widehat \Omega_\Sigma}{\asymp}\) is equipped with the quotient topology.)
\end{itemize}

\stepheader{Step~\ref{step:Note diffeomorphism}}
Suppose now that \(\mathcal A\) and \(\mathcal A'\) are smooth structures (including topologies) on the set \(\q{\Omega}{\sim}\) making it into a smooth manifold such that the quotient map \(p \colon \Omega \to \q{\Omega}{\sim}\) is a smooth submersion with respect to both of them. Then 
the smooth manifolds \(\left(\q{\Omega}{\sim}, \mathcal A\right)\) and \(\left(\q{\Omega}{\sim}, \mathcal A'\right)\) are diffeomorphic. 
This can be seen as follows: Choose a point \({q \in \q{\Omega}{\sim}}\). Since \(p \colon \Omega \to \left(\q{\Omega}{\sim}, \mathcal A\right)\) is a surjective submersion, it has a smooth section \(s \colon (V, \mathcal A) \to \Omega\) on an open \(\mathcal A\)-neighborhood \(V\) of \(q\), and we get the following commutative diagram:
\[\begin{tikzpicture}[every node/.style={midway}]
\matrix[column sep={3.5em,between origins}, row sep={2.5em}] at (0,0) {
	; & ; & \node(Omega) {\(\Omega\)}; & ; & ; \\

	\node(V) {\quad\(V\)}; & ; &
	\node(Quotient1) {\(\left(\q{\Omega}{\sim}, \mathcal A\right)\)}; &
		; &
	\node(Quotient2) {\(\left(\q{\Omega}{\sim}, \mathcal A'\right).\)}; \\
};

\draw[right hook->] (V)			-- (Quotient1) node[anchor=south] {};
\draw[->] (V)			-- (Omega) node[anchor=south] {\(s\ \)};
\draw[->] (Omega)			-- (Quotient1) node[anchor=west] {\(p\)};
\draw[->] (Omega)			-- (Quotient2) node[anchor=south] {\(\ \ p'\)};
\draw[<->] (Quotient1) -- (Quotient2) node[anchor=north] {\(\id\)};
\end{tikzpicture}\]
This means that \(p' \circ s \colon (V, \mathcal A) \to \left(\q{\Omega}{\sim}, \mathcal A'\right)\) is smooth. The map \(p' \circ s\) is the restriction of the identity map \(\id \colon \left(\q{\Omega}{\sim}, \mathcal A\right) \to \left(\q{\Omega}{\sim}, \mathcal A'\right)\) to the neighborhood \(V\) of \(q\). Since \(q\) was arbitrary, this shows that the identity is a smooth map. Repeating the argument with \(\mathcal A\) and \(\mathcal A'\) reversed shows that its inverse is also smooth. Hence it is a diffeomorphism.

Equip \(\q{\Omega}{\sim}\) with this unique topology and smooth structure, whenever it exists.

\stepheader{Step~\ref{step:Note metric}}
Suppose that \(g_1\) and \(g_2\) are time-oriented Lorentzian metrics on \(\q{\Omega}{\sim}\) such that the tangents of the curves \(t \mapsto p(\xi, t)\) form a subset with fiberwise nonempty interior of the bundle of future-directed unit timelike tangent vectors in \(g_i\). 
Then the tangent map of the identity \(\id \colon \left(\q{\Omega}{\sim}, g_1\right) \to \left(\q{\Omega}{\sim}, g_2\right)\) maps a nonempty open set of future-directed unit timelike vectors at each point to future-directed unit timelike vectors, and hence the identity \(\id\) is an isometry. Equip \(\q{\Omega}{\sim}\) with this unique metric, whenever it exists.

\stepheader{Step~\ref{step:Choose submersion}}
Let \((\M, g)\) be a time-oriented Lorentzian manifold with boundary satisfying the hypotheses of the theorem: 
\begin{itemize}
\item \(\M\) is strongly causal and compact.
\item Every point in the interior \(M\) of \(\M\) is the future endpoint of a past-inextendible timelike geodesic intersecting the boundary transversely.
\end{itemize}
Suppose moreover that \(\TdM \cong E\) by a fiber bundle isomorphism which identifies the fireworks data of \(\M\) with \(D\).
Our first task is to construct a surjective submersion \(\Theta \colon \Omega \to M\), and for this we will use the exponential map of \(\M\). Let \(n\) be the normalization map defined by
\[n(\xi) = \frac{\xi}{\sqrt{-g(\xi, \xi)}}\]
for timelike \(\xi\). Define
\[\Theta(\xi, t) = \exp\left(t n(\xi)\right).\]
We first need to show that this definition makes sense. Note that \(\Omega\) contains only timelike vectors \(\xi\), so \(n(\xi)\) is well-defined. It is also necessary that there is a geodesic segment in \(\M\) of length \(t\) starting with \(\xi\) and that this segment ends in the interior of \(\M\). The first part is guaranteed by definition of \(\Omega\), since this definition tells us that \((\xi, t, \eta) \in D\) for some lightlike \(\eta \in \TdM\), which in turn means that there is a broken geodesic starting with \(\xi\) the timelike part of which exists for a distance \(t\). To see that \(\Theta\left(t n(\xi)\right) \notin \partial \M\), note that if this were not the case then \((\xi, t)\) would not be an interior point of \(\Omega \subseteq E \times [0, \infty)\), contradicting the openness of \(\Omega\). This shows that \(\Theta\) is well-defined.

To show that \(\Theta \colon \Omega \to M\) is a surjective submersion, we will construct smooth local sections  around every point of \(M\). Choose a point \(p \in M\). By hypothesis, there is at least one timelike geodesic with future endpoint \(p\) which intersects \(\partial \M\) transversely. Let \(X(p)\) be a future-directed tangent vector to any such geodesic at \(p\). Extend \(X(p)\) to a smooth vector field \(X\) on a neighborhood \(U\) of \(p\). By shrinking \(U\) we can ensure (using \prettyref{lem:Smooth time to boundary}) that the geodesics ending with \(X(q)\) for \(q \in U\) intersect \(\partial \M\) transversely. Define \(s \colon U \to \Omega\) by
\[
s(q) = (\Phi_g(-T_\partial(-X(q)), X(q)), T_\partial(-X(q))) \in \Omega
\]
where \(\Phi_g\) is the geodesic flow and \(T_\partial(-X(q))\) is the time after which the geodesic starting with \(-X(q)\) hits the boundary of \(\M\). (Note that \(s\) takes values in \(\Omega\) since the strong causality condition and compactness of \(\M\) tells us that each past-directed lightlike geodesic from each point in \(M\) must intersect \(\partial \M\).) The function \(T_\partial\) is smooth by \prettyref{lem:Smooth time to boundary}, so \(s\) is also smooth. Now \(\Theta(s(q)) = q\) for all \(q \in U\) so \(s\) is a section of \(\Theta\) on the neighborhood \(U\) of the arbitrary point \(p\). By doing this for all timelike geodesics with future endpoint \(p\) which intersect \(\partial \M\) transversely, we get local sections which hit all preimages of \(p\) under \(\Theta\). Hence \(\Theta\) is a surjective submersion.

Extend \(\Theta \colon \Omega \to M\) to a continuous map \(\widehat \Theta \colon \widehat \Omega \to \M\) using the same formula as for defining \(\Theta\). This is well-defined since the geodesic starting with \(\xi\) has length at least \(t\) if \((\xi, t) \in \widehat \Omega\). (Note that it does not make sense to ask whether \(\widehat \Theta\) is smooth, since we have not given \(\widehat \Omega\) a smooth structure.)

We will show that \(\Theta\) descends to a bijection \(\q{\Omega}{\sim} \to M\), in other words that there is a bijection \(\tilde \Theta\) such that the following diagram commutes:\\
\[\begin{tikzpicture}[every node/.style={midway}]
\matrix[column sep={5em,between origins}, row sep={2em}] at (0,0) {
	; & \node(Omega) {\(\Omega\)}; \\

	\node (M) {\(M\)}; &
	\node(Quotient) {\(\q{\Omega}{\sim}\).};\\
};

\draw[->] (Omega) -- (M)			node[anchor=south] {\(\Theta\ \)};
\draw[->] (Quotient)	-- (M) node[anchor=north] {\(\tilde\Theta\)};
\draw[->] (Omega) -- (Quotient) node[anchor=west] {\(p\)};
\end{tikzpicture}\]
To show this, it is sufficient to show that \(\Theta(\xi_0, t_0) = \Theta(\xi_1, t_1)\) if and only if \({(\xi_0, t_0) \sim (\xi_1, t_1)}\). Suppose first that \((\xi_0,t_0) \sim (\xi_1, t_1)\). By definition, this means that \((\xi_0, t_0)\) and \((\xi_1, t_1)\) belong to the same connected component \(C\) of \(\q{\widehat \Omega_\Sigma}{\asymp}\) for some \(\Sigma \subseteq E\). We will show that \(\widehat \Theta\) descends to a map defined on the quotient \(\q{\widehat \Omega_\Sigma}{\asymp}\), and that this map maps all of \(C\) to a single point, so that in particular \(\Theta(\xi_0, t_0) = \Theta(\xi_1, t_1)\). For both these statements, we will need the following fact: Every point of \(\M\) (including boundary points) has a neighborhood \(V\) such that if \(\Sigma \subseteq E\) contains two nonparallel vectors then \(\widehat \Theta(\widehat \Omega_\Sigma) \cap V\) contains at most one element. To see this, take a point \(q \in \M\). If \(\Sigma\) contains a non-lightlike vector, then \(\widehat \Omega_\Sigma = \emptyset\), so we may assume that \(\Sigma\) contains only lightlike vectors. Temporarily enlarge \(\M\) to a Lorentzian manifold \(\mathcal M\) without boundary. Let \(U \subseteq \mathcal M\) be a convex neighborhood (as defined in \prettyref{app:Assorted definitions}) of \(q\) in \(\mathcal M\). Since \(\M\) is strongly causal and \(U \cap \M\) is an open (in \(\M\)) neighborhood of \(q\), there is a causally convex (as defined in \prettyref{app:Assorted definitions}) open neighborhood \(V \subseteq U \cap \M\) of \(q\). Suppose that \(\widehat \Theta(\widehat \Omega_\Sigma) \cap V\) has two distinct elements \(q \neq \hat{q}\). Then the family of lightlike geodesics in \(\M\) from \(\Sigma\) passes through both \(q\) and \(\hat{q}\). The geodesic segments between these points must be completely contained in \(V\) since \(V\) is causally convex. Since \(\Sigma\) contains two nonparallel vectors, this means that we have more than one geodesic segment in \(U\) connecting \(q\) and \(\hat{q}\), which contradicts \(U\) being a convex neighborhood. Hence \(\widehat \Theta(\widehat \Omega_\Sigma) \cap V\) can have at most one element.

{
\newcommand{\tf}{\mathfrak t}
To see that \(\Theta\) descends to a map on the quotient \(\q{\widehat \Omega}{\asymp}\), suppose \((\zeta_0, s_0) \asymp (\zeta_1, s_1)\), or equivalently that for all \(\tau \in [0, \infty)\) and \(\eta \in E\) it holds that
\[
(\zeta_0, s_0 + \tau, \eta) \in D \iff (\zeta_1, s_1 + \tau, \eta) \in D.
\]
Note that the following statements are equivalent:
\begin{itemize}
\item \((\zeta_0, s_0 + \tau)\) belongs to the domain of \(\widehat\Theta\).
\item There exists some \(\eta \in E\) such that \((\zeta_0, s_0 + \tau, \eta) \in D\). 
\item There exists some \(\eta \in E\) such that \((\zeta_1, s_1 + \tau, \eta) \in D\). 
\item \((\zeta_1, s_1 + \tau)\) belongs to the domain of \(\widehat\Theta\).
\end{itemize}
The set of \(\tau \in [0, \infty)\) such that \(\widehat\Theta(\zeta_0, s_0 + \tau)\) is defined (or equivalently such that \(\widehat\Theta(\zeta_1, s_1 + \tau)\) is defined) is closed and bounded, and so has a maximum. Let \(\tf\) denote this maximum. Our first claim is that \({\widehat \Theta(\zeta_0, s_0 + \tf) = \widehat \Theta(\zeta_1, s_1 + \tf)}\). To see this, note that \(\widehat \Theta(\zeta_0, s_0 + \tf) \in \partial \M\), since otherwise the geodesic starting with \(\zeta_0\) would be extendible beyond time \(s_0 + \tf\). Similarly, \({\widehat \Theta(\zeta_1, s_1 + \tf) \in \partial \M}\). 
Choose \(\eta \in T_{\widehat\Theta(\zeta_1, s_1 + \tf)}\M\) outward-directed and future-directed such that \((\zeta_1, s_1 + \tf, \eta) \in D\). Then \((\zeta_0, s_0 + \tf, \eta) \in D\). If it were the case that \({\widehat \Theta(\zeta_0, s_0 + \tf) \neq \widehat \Theta(\zeta_1, s_1 + \tf)}\), then the inextendible geodesic ending with \(\eta\) must pass through \(\widehat \Theta(\zeta_0, s_0)\) and \(\widehat \Theta(\zeta_1, s_1)\) in that order. Reversing the roles of \((\zeta_1, s_1)\) and \((\zeta_0, s_0)\) we get a lightlike geodesic through these points in the opposite order. Concatenating these geodesic segments, we get a closed piecewise smooth future-directed lightlike curve, contradicting the assumption that \(\M\) is strongly causal. Hence it must hold that \({\widehat\Theta(\zeta_0, s_0 + \tf) = \widehat\Theta(\zeta_1, s_1 + \tf)}\).
We will now show that the curves \(\tau \mapsto (\zeta_0, s_0 + \tau, \eta)\) and \(\tau \mapsto (\zeta_1, s_1 + \tau, \eta)\) coincide near \(\tau = \tf\). Let \(V\) be a neighborhood of the common point \({\widehat \Theta(\zeta_0, s_0 + \tf) = \widehat \Theta(\zeta_1, s_1 + \tf)}\) with the property described above: For any \(\mathfrak S \subseteq E\) containing two nonparallel vectors, \(\widehat\Theta(\widehat \Omega_{\mathfrak S}) \cap V\) has at most one element. For all sufficiently small \(\epsilon > 0\) both \(\widehat\Theta(\zeta_0, s_0 + \tf - \epsilon)\) and \(\widehat\Theta(\zeta_1, s_1 + \tf - \epsilon)\) are contained in \(V\). For fixed \(\epsilon\), make the choice
\[{\mathfrak S} = \left\{ \eta \in E \mid (\zeta_0, s_0 + \tf - \epsilon, \eta) \in D \right\} = \left\{ \eta \in E \mid (\zeta_1, s_1 + \tf - \epsilon, \eta) \in D \right\}\]
where the second equality is due to the assumption that \(\left(\zeta_0, s_0\right) \asymp \left(\zeta_1, s_1\right)\). Note that \({\mathfrak S}\) contains many nonparallel elements since every future-directed lightlike geodesic from \(\widehat\Theta(\zeta_0, s_0 + \tf - \epsilon)\) hits \(\partial \M\) at some point by the assumption of strong causality, and no two such geodesics can end in parallel vectors at \(\partial \M\) unless they were parallel to begin with. By definition of fireworks data, these geodesics must be represented in \(D\). Now \(\left(\zeta_0, s_0 + \tf - \epsilon\right)\) and \(\left(\zeta_1, s_1 + \tf - \epsilon\right)\) are both contained in \(\widehat \Omega_{\mathfrak S}\). Since \(\Theta(\widehat \Omega_{\mathfrak S}) \cap V\) has at most one element, we have then shown that \(\widehat\Theta(\zeta_0, s_0 + \tf - \epsilon) = \widehat \Theta(\zeta_1, s_1 + \tf - \epsilon)\) for all sufficiently small \(\epsilon \geq 0\). Since both curves \(\tau \mapsto \widehat \Theta(\zeta_i, s_i + \tau)\) are geodesics, they must agree on their common domain. In particular, they must agree at \(\tau = 0\) so it must hold that \(\widehat\Theta(\zeta_0, s_0) = \widehat\Theta(\zeta_1, s_1)\). This shows that \(\widehat\Theta\) descends to a map on the quotient \(\q{\widehat \Omega}{\asymp}\). Hence it also descends to a map on the quotient \(\q{\widehat \Omega_{\Sigma}}{\asymp}\). This map is continuous since \(\q{\widehat \Omega_{\Sigma}}{\asymp}\) has been given the quotient topology. 
}

We now continue the proof that \(\Theta(\xi_0, t_0) = \Theta(\xi_1, t_1)\) if \((\xi_0,t_0)\sim(\xi_1,t_1)\). The reader is encouraged to keep \prettyref{fig:Equivalence relation} in mind. Suppose first that \((\xi_0, t_0)\) and \((\xi_1, t_1)\) belong to the same connected component \(C\) of the space \(\q{\widehat \Omega_\Sigma}{\asymp}\). The set \(\Sigma\) contains two nonparallel vectors (and in fact infinitely many pairwise nonparallel vectors) since \(\widehat \Omega_\Sigma\) is nonempty. This is because each future-directed lightlike geodesic from each point in \(M\) reaches \(\partial \M\) by compactness and strong causality. We will show that \(\widehat\Theta\) maps all of \(C\) to a single point, so that in particular \(\Theta(\xi_0, t_0) = \Theta(\xi_1, t_1)\), by showing that the set \(\widehat\Theta(\widehat \Omega_\Sigma)\) is discrete. This set consists of points \(q\in M\) such that the past-inextendible lightlike geodesics ending with vectors in \(\Sigma\) all intersect in \(q\). Choose a neighborhood \(V\) of \(q \in \widehat\Theta(\widehat \Omega_\Sigma)\) such that \(\widehat \Theta(\widehat \Omega_\Sigma)\cap V\) consists of at most one point, as described above. This single point must then be \(q\) itself. Since \(q\) was arbitrary, \(\widehat \Theta(\widehat \Omega_\Sigma)\) is a discrete space. The map \(\widehat \Theta \colon \q{\widehat \Omega_\Sigma}{\asymp} \to \widehat \Theta(\widehat \Omega_\Sigma)\) is continuous, and hence maps the connected component \(C\) to a connected component of \(\widehat \Theta(\widehat \Omega_\Sigma)\), which must be a single point since this space is discrete. Hence \(\Theta(\xi_0, t_0)=\Theta(\xi_1, t_1)\) as claimed.

Conversely, suppose that \(\Theta(\xi_0, t_0) = \Theta(\xi_1, t_1)\). In other words, suppose that the geodesics \(\gamma_0\) and \(\gamma_1\) starting with \(\xi_0\) and \(\xi_1\) and parameterized by curve length hit the same point \(q\) after time \(t_0\) and \(t_1\) respectively:
\[\gamma_0(t_0) = q = \gamma_1(t_1).\]
Since \(\Theta(\lambda \xi_i, t_i) = \Theta(\xi_i, t_i)\) and \((\lambda \xi_i, t_i) \asymp (\xi_i, t_i)\) for all \(\lambda > 0\), we may assume without loss of generality that the \(\xi_i\) are unit vectors.
Let \(\zeta_i = -\dot\gamma_i(t_i)\) and let \(\zeta \colon[0,1] \to T_qM\) be a curve of timelike unit vectors at \(q\) with \(\zeta(0) = \zeta_0\) and \(\zeta(1) = \zeta_1\). For \(s \in [0,1]\) let \(\alpha_s(\,\cdot\,)\) denote the past-inextendible geodesic starting with \(\zeta(s)\). Let \(\Gamma=\left\{ \left(s,\tau\right)\in\left[0,1\right]\times\R\mid\alpha_{s}\left(\tau\right)\in\partial \M\right\} \). The set \(\Gamma\) is closed since it is the inverse image of the closed set \(\partial \M\) under the continuous map \(\left(s,\tau\right)\mapsto\exp\left(\tau\zeta\left(s\right)\right)\). Define the equivalence relation \(\approx\) on \(\Gamma\) by letting \(\left(s,t\right)\approx\left(s,t'\right)\) for all \(s\), \(t\), and \(t'\). Note that \(\approx\) is a \enquote{two-dimensional} version of \(\asymp\) in the sense that \((-\dot\alpha_s(t), t) \asymp (-\dot\alpha_{s'}(t'), t')\) if and only if \((s, t) \approx (s', t')\). (For the \enquote{only if} direction, the observation that \(\widehat\Theta\) descends to a map on \(\q{\widehat\Omega}{\asymp}\) is needed.) By definition of \(\sim\), it holds that \((\xi_{0},t_{0})\sim(\xi_{1},t_{1})\) if \(\left(0,t_{0}\right)\) and \(\left(1,t_{1}\right)\) belong to the same connected component of \(\q{\Gamma}{\approx}\), which is true since \prettyref{lem:Topological lemma} tells us that the space \(\q{\Gamma}{\approx}\) is connected. This completes the proof that \(\Theta\left(\xi_{0},t_{0}\right)=\Theta\left(\xi_{1},t_{1}\right)\) if and only if \((\xi_{0},t_{0})\sim(\xi_{1},t_{1})\).

\stepheader{Step~\ref{step:Identify timelike unit vectors}}
We now wish to show that the tangents of the curves \(t\mapsto\Theta\left(\xi,t\right)\) are future-directed unit timelike vectors based in \(M\), and that the unit tangent space at each point contains an open set of such tangent vectors. Each such tangent is future-directed unit and timelike since the curve \(t \mapsto \Theta\left(\xi, t\right)\) is a timelike unit-speed future-directed geodesic. Now consider a point \(p \in M\). By hypothesis there is a future-directed unit timelike geodesic ending with some \(\zeta_0 \in T_pM\) which intersects the boundary transversely. By \prettyref{lem:Smooth time to boundary} there is a neighborhood of \(\zeta_0\) where all vectors are final velocities of geodesics transverse to the boundary. This gives us an open subset \(V\) of the unit timelike tangent space at \(p\). For \(\zeta \in V\), let \(\xi(\zeta) \in \TdM\) denote the initial velocity of the geodesic, and let \(\mathfrak t(\zeta)\) denote its length. Then \(\zeta\) is the tangent of the curve \(t \mapsto \Theta(\xi(\zeta), t)\) at \(\mathfrak t(\zeta)\).

\stepheader{Step~\ref{step:Conclude isometry of the interior}}
Finally, suppose that \((\M_1, g_1)\) and \((\M_2, g_2)\) are two Lorentzian manifolds satisfying the hypotheses of the theorem. Consider the following commutative diagram:\\
\[\begin{tikzpicture}[every node/.style={midway}]
\matrix[column sep={5em,between origins}, row sep={2em}] at (0,0) {
	; &
	\node(Omega) {\(\Omega\)}; &
	; \\

	\node (M1) {\(M_1\)}; &
	\node(Quotient) {\(\q{\Omega}{\sim}\)}; &
	\node (M2) {\(M_2\)}; \\
};

\draw[->] (Omega) -- (M2)			 node[anchor=west] {\(\Theta_2\)};
\draw[->] (Quotient)	-- (M2) node[anchor=north] {\(\tilde\Theta_2\)};

\draw[->] (Omega) -- (Quotient)  node[anchor=east] {\(p\)};

\draw[->] (Omega) -- (M1)			 node[anchor=east] {\(\Theta_1\ \)};
\draw[->] (Quotient)	-- (M1) node[anchor=north] {\(\tilde\Theta_1\)};
\end{tikzpicture}\]\\
The bijections \(\tilde\Theta_{i}\) induce two smooth structures on \(\q{\Omega}{\sim}\) such that \(p\) is a surjective submersion, since the \(\Theta_{i}\) are surjective submersions and \(p=\tilde\Theta_{i}^{-1}\circ\Theta_{i}\) by commutativity of the diagram. By uniqueness of the smooth structure on \(\q{\Omega}{\sim}\) these smooth structures must be diffeomorphic, so \(\tilde\Theta_{2}\circ \tilde\Theta_{1}^{-1}\) is a diffeomorphism. Similarly, the bijections \(\tilde\Theta_{i}\) induce two metrics \(g_{1}\) and \(g_{2}\) on \(\q{\Omega}{\sim}\). These metrics are such that the tangents of the curves \(t\mapsto p(\xi,t)\) form a subset with fiberwise nonempty interior of the bundle of future-directed unit timelike vectors in \(g_{i}\). Hence these metrics coincide, and \(\tilde\Theta_{2}\circ \tilde\Theta_{1}^{-1}\) is an isometry. This proves that \((M_1, g_1)\) and \((M_2, g_2)\) are isometric.

\stepheader{Step~\ref{step:Conclude isometry including boundary}}
The manifolds \((\M_1, g_1)\) and \((\M_2, g_2)\) are two conformal completions of the common interior \((M_1, g_1) \cong (M_2, g_2)\). They have strongly causal boundaries and are maximal since they are compact.
By~\cite[Theorem 5.3]{Chrusciel10} and the remarks on page 27 of that paper, these extensions are equivalent, which implies that the manifolds \((\M_1, g_1)\) and \((\M_2, g_2)\) are conformal. Since their interiors are isometric, the conformal factor is \(1\) in the interior and hence everywhere by continuity. This means that \((\M_1, g_1)\) and \((\M_2, g_2)\) are isometric.
\end{proof}

It is possible to extend the above proof to directly yield an isometry including the boundary without using~\cite[Theorem 5.3]{Chrusciel10}. To do this, one works with \(\widehat \Omega\) directly, instead of \(\Omega\), to get a surjection \(\widehat \Omega \to \M\). Since \(\widehat \Omega\) is not necessarily a manifold (\prettyref{fig:Non-manifold Omega}), this is not a submersion in the usual sense, but it retains the property of having sufficiently many local sections for the proof to go through.

The properties of fireworks data which were used in the proof are summarized in the definition of \enquote{generalized fireworks data} given below; the proof goes through without change for any choice of generalized fireworks data. We note this to show that some of the particular choices we made when defining fireworks data do not matter.
\begin{defn}\label{def:Generalized fireworks data}
A \emph{generalized fireworks data map} is a map \(\D\) which to each time-oriented Lorentzian manifold \(\M\) with boundary \(\partial \M\) assigns the pair
\[
\D(\M) = (\TdM \to \partial \M, D),
\]
where \(\TdM \to \partial \M\) is the restriction of the smooth fiber bundle \(T\M \to \M\) to \(\partial \M\) and \(D\) is a subset of \(\TdM \times [0, \infty) \times \TdM\) such that the following holds:\\
There is a family (which is not part of the data) of sets \({(\Sigma_p)}_{p \in \M}\) such that
\begin{itemize}
	\item each set \(\Sigma_p\) is a subset of \(\TdM\),
	\item each set \(\Sigma_p\) consists of future-directed lightlike vectors,
	\item each past-inextendible geodesic ending with some \(\eta \in \Sigma_p\) passes through the point \(p\),
	\item each set \(\Sigma_p\) contains two nonparallel vectors,
	\item \((\xi, t, \eta) \in D\) if and only if
	\begin{itemize}[label={\(\bullet\)}]
		\item \(\xi\) is future-directed and timelike,
		\item \(\eta\) is future-directed and lightlike,
		\item \(\eta \in \Sigma_{\exp(t \hat \xi)}\) where \(\hat \xi\) is the normalization of \(\xi\) to unit length.
	\end{itemize}
\end{itemize}
Moreover, we demand that if \(\M_1\) and \(\M_2\) are isometric, then \(\D(\M_1)\) and \(\D(\M_2)\) are isomorphic in the sense defined below. (If this were not imposed, then one might make \enquote{different} choices of \(\Sigma_p\) for different isometric manifolds.)

The generalized fireworks data involves pairs \((E \to B, D)\) where \(E \to B\) is a smooth fiber bundle and \(D \subset E \times [0, \infty) \times E\) is a set. Our notion of isomorphism between such pairs is the following:
For \(i \in \{1, 2\}\) let \(E_i \to B_i\) be smooth fiber bundles and let \(D_i \subseteq E_i \times [0, \infty) \times E_i\) be arbitrary subsets. We say that the pairs \((E_1 \to B_1, D_1)\) and \((E_2 \to B_2, D_2)\) are \emph{isomorphic} if there is a smooth fiber bundle isomorphism \(F \colon (E_1 \to B_1) \to (E_2 \to B_2)\) such that \((F \times \operatorname{id} \times F)(D_1) = D_2\).
\end{defn}

\section{Broken timelike lens rigidity}
\label{sec:Broken timelike lens rigidity}
We now turn to the question of whether a Lorentzian manifold can be reconstructed from its broken timelike lens data. It turns out that this problem can be reduced to the fireworks rigidity considered in the previous section. 
The broken timelike lens data is defined similarly to the fireworks data, with the difference that we now consider broken geodesics which are always timelike. 
\begin{defn}
Let \(\M\) be a time-oriented Lorentzian manifold with boundary \(\partial \M\). The \emph{broken timelike lens data} of \(\M\) is the set of triples \((\xi, t, \eta) \in \TdM \times \R \times \TdM\) such that
\begin{itemize}
\item \(\xi\) is future-directed and timelike,
\item \(\eta\) is future-directed and timelike,
\item the geodesic starting with \(\xi\) intersects the geodesic ending with \(\eta\), and the broken geodesic thus formed has total length \(t\).
\end{itemize}
The geodesics are allowed to intersect \(\partial \M\) in arbitrarily many points.
We say that \(\M\) and \(\Mprime\) have \emph{isomorphic} broken timelike lens data if \(\TdM \to \partial \M\) and \(\TdMprime \to \partial \Mprime\) are isomorphic as smooth fiber bundles, by an isomorphism which identifies the broken timelike lens data of \(\M\) with the broken timelike lens data of \(\Mprime\)\!, and which identifies the zero section of \(\TdM\) with the zero section of \(\TdMprime\).
\end{defn}

We will now construct the fireworks data of a manifold from its broken timelike lens data, so that we can apply \prettyref{thm:Fireworks data determines isometry class} and get a statement about broken timelike lens rigidity. We do the construction with a lightlike convexity condition on the manifold: We assume that any lightlike geodesic which is tangent to the boundary somewhere must be completely contained in the boundary. In other words, any lightlike geodesic entering the interior of the manifold must intersect the boundary transversely. It is possible to prove broken timelike lens rigidity in greater generality, at the cost of introducing significant further technical complications, by doing this reconstruction with a weaker convexity condition on the manifold. When the convexity condition is weakened, not every lightlike geodesic is the limit of timelike geodesics. However, the proof of \prettyref{thm:Fireworks data determines isometry class} is robust under changes of the data which involve restricting the set of lightlike geodesics represented in the data, as illustrated by the definition of generalized fireworks data in \prettyref{def:Generalized fireworks data}, so we would still get a rigidity statement. 
We will not consider that approach here.
\begin{prop}\label{prop:Broken timelike lens data determines fireworks data}
Let \((\M_1, g_1)\) and \((\M_2, g_2)\) be strongly causal, compact, time-oriented Lorentzian manifolds of dimension \(n \geq 2\) with boundary. Suppose that any lightlike geodesic which is tangent to the boundary \(\partial \M_i\) somewhere is completely contained in the boundary. If \(\M_1\) and \(\M_2\) have isomorphic broken timelike lens data, then they also have isomorphic fireworks data.
\end{prop}
\begin{proof}
Let \(\M\) be a strongly causal, compact, time-oriented Lorentzian manifold with boundary such that any lightlike geodesic which is tangent to the boundary somewhere is completely contained in the boundary. Let \(L\) denote its broken timelike lens data and let \(D\) denote its fireworks data. We will show that \(D\) is the set of all \((\xi, t, \eta)\) such that
\begin{itemize}
\item \(\eta \neq 0\),
\item \((\xi, t, \eta) \notin L\),
\item there is a sequence \(t_i \to t\) and a sequence \(\eta_i \to \eta\) such that \((\xi, t_i, \eta_i) \in L\).
\end{itemize}
We will then have constructed \(D\) from \(L\). The construction is by taking limits, which commutes with diffeomorphisms, so if \(\M_1\) and \(\M_2\) have isomorphic broken timelike lens data then they also have isomorphic fireworks data.

We will now show that \(D\) is the proposed set of limits. Suppose first that \(t_i \to t\) and \(\eta_i \to \eta\) are sequences such that \((\xi, t_i, \eta_i) \in L\) and \((\xi, t, \eta) \notin L\). 
The vector \(\eta\) is future-directed causal since it is the limit of future-directed causal vectors, and non-timelike since otherwise we would have had \((\xi, t, \eta) \in L\). Hence \(\eta\) is lightlike. 
That \((\xi, t_i, \eta_i) \in L\) means that there are \(\tau_i, s_i \geq 0\) such that
\[\exp(\tau_i \xi) = \exp(-s_i \eta_i),\]
\[\tau_i \sqrt{-g(\xi, \xi)} + s_i \sqrt{-g(\eta_i, \eta_i)} = t_i.\]
We will prove that \(s_i\) has a bounded subsequence. If not, then \(s_i \to \infty\). Then \(\exp(-s \eta_i) \to \exp(-s \eta)\) for all \(s \geq 0\) by continuity of the exponential map, so \(s \mapsto \exp(-s \eta)\) is a geodesic defined on \([0, \infty)\). This geodesic is lightlike since \(\eta\) is lightlike. This is a contradiction since \(\M\) is strongly causal. This means that \(s_i\) has a bounded subsequence, and hence also a convergent subsequence. For notational convenience, assume without loss of generality that \(s_i\) itself is convergent, and denote its limit by \(s\). This means that \(s_i \sqrt{-g(\eta_i, \eta_i)} \to 0\) so that \(\tau_i \sqrt{-g(\xi, \xi)} \to t\). Let \(\tau = t/\sqrt{-g(\xi, \xi)}\). We now know that
\[\exp(\tau \xi) = \exp(-s \eta),\]
\[\tau \sqrt{-g(\xi, \xi)} + s \sqrt{-g(\eta, \eta)} = t.\]
Hence there is a broken geodesic of length \(t\) starting with \(\xi\) and ending with \(\eta\). This shows that \((\xi, t, \eta) \in D\). 

Conversely, suppose that \((\xi, t, \eta) \in D\).
Let \(p = \exp(t \hat \xi)\), where \(\hat \xi\) is the normalization of \(\xi\) to unit length. 
If \(p \in \partial \M\) then \((\xi, t, \zeta) \in L\) for all timelike future-directed \(\zeta \in T_p\M\) so in particular there is a sequence \(\zeta_i \to \eta\) such that \((\xi, t, \zeta_i) \in L\). Suppose now that \(p \notin \partial \M\).
Let \(\zeta \in T_p\M\) be such that the geodesic starting with \(\zeta\) ends with \(\eta\). By hypothesis, this geodesic must be transverse to \(\partial \M\). By \prettyref{lem:Smooth time to boundary} all timelike vectors in \(T_p\M\) sufficiently close to \(\zeta\) also give geodesics which are transverse to \(\partial \M\), and the function \(T_\partial\) measuring the parameter time to the boundary is smooth near \(\zeta\). Choose a sequence \(\zeta_i \to \zeta\) and let \(\eta_i = \Phi_g(T_\partial(\zeta_i), \zeta_i)\), where \(\Phi_g\) denotes the geodesic flow. Let \(t_i = t + \sqrt{-g(\zeta_i, \zeta_i)} T_\partial(\zeta_i)\). Then \((\xi, t_i, \eta_i) \in L\) and \((\xi, t_i, \eta_i) \to (\xi, t, \eta)\). Since \(\eta\) is lightlike, it cannot be the case that \((\xi, t, \eta) \in L\). This completes the proof.
\end{proof}

\begin{figure}
 \centering{}
 \begin{overpic}[width=0.27\textwidth]{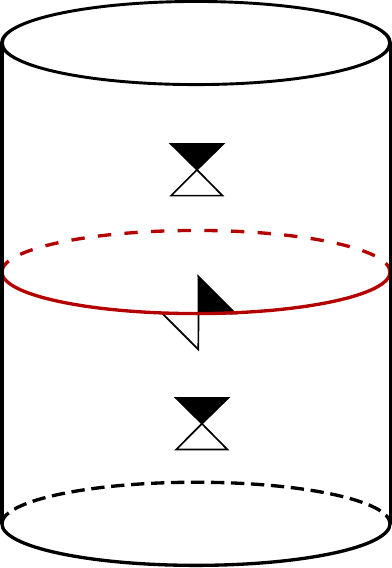}
 \end{overpic}
 \caption{A metric where the construction in \prettyref{prop:Broken timelike lens data determines fireworks data} of fireworks data from broken timelike lens data does not work, since it captures far too many lightlike vectors at the upper boundary. This example violates the condition of strong causality.}
\label{fig:Counterexample to limit construction}
\end{figure}

The above construction does not work if the condition that the manifold is strongly causal is completely removed. As a counterexample, consider a metric on \(\left[0,1\right]\times S^{1}\) with a single closed lightlike curve and spacelike boundary, as shown in \prettyref{fig:Counterexample to limit construction}. Then for every future-directed timelike vector \(\xi\) at the lower boundary, there is a \(t\) such that \(\left(\xi,t,\eta\right)\) is a limit of the broken timelike lens data for all lightlike \(\eta\) at the upper boundary. However, the fireworks data contains at most two such triples for each pair \(\left(\xi,t\right)\).

We can now combine \prettyref{prop:Broken timelike lens data determines fireworks data} with the result from the previous section to obtain a theorem about broken timelike lens rigidity.

\begin{thm}\label{thm:Broken timelike lens rigidity}
Let \(\left(\M,g\right)\) be a strongly causal, compact, time-oriented Lorentzian manifold of dimension \(n \geq 3\) with boundary, where every point belongs to a past-inextendible timelike geodesic intersecting the boundary transversely. Suppose moreover that any lightlike geodesic which is tangent to the boundary somewhere is completely contained in the boundary. The isometry type of \(\left(\M,g\right)\) is then uniquely determined by the broken timelike lens data of \((\M, g)\).
\end{thm}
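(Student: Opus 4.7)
The plan is to reduce the broken timelike lens rigidity problem to the time-probe rigidity problem already established in \prettyref{thm:Time-probe data determines isometry class}. Concretely, I would argue as follows. Suppose \((\M_1, g_1)\) and \((\M_2, g_2)\) are two manifolds satisfying the hypotheses of the theorem and having isomorphic broken timelike lens data. The main task is to show that this implies the isometry class is determined.

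First I would verify that both manifolds satisfy the hypotheses of \prettyref{prop:Broken timelike lens data determines time-probe data}: strong causality, compactness, time-orientability, dimension \(\geq 2\), and the lightlike convexity condition that any lightlike geodesic tangent to \(\partial \M\) somewhere is contained in \(\partial \M\). All of these are assumed in the present theorem (and \(n \geq 3 \geq 2\)), so \prettyref{prop:Broken timelike lens data determines time-probe data} applies and tells us that the two manifolds have isomorphic time-probe data. The key point here is that the construction in that proposition produces the time-probe data from the broken timelike lens data purely by a limiting procedure in the fiber bundle \(\TdM \times [0, \infty) \times \TdM\), and such a procedure is preserved under any smooth fiber bundle isomorphism that identifies the broken lens data sets.

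Next I would check that the hypotheses of \prettyref{thm:Time-probe data determines isometry class} are met. These require strong causality, compactness, time-orientability, dimension \(n \geq 3\), and the existence, for each interior point, of a past-inextendible timelike geodesic through it which intersects the boundary transversely. All of these conditions are directly in the hypothesis of the present theorem. Applying \prettyref{thm:Time-probe data determines isometry class} with the isomorphic time-probe data produced in the previous step then yields that \((\M_1, g_1)\) and \((\M_2, g_2)\) are isometric.

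I do not expect any genuine obstacle in this proof beyond bookkeeping, since both of the deep steps have already been carried out: the limit-based extraction of time-probe data from broken timelike lens data, and the recovery of the manifold from time-probe data. The only thing to be careful about is that the isomorphism of broken timelike lens data is an isomorphism of the underlying fiber bundles \(\TdM_1 \to \partial \M_1\) and \(\TdM_2 \to \partial \M_2\) that also identifies zero sections; this is exactly enough to conclude that it passes through the limit construction of \prettyref{prop:Broken timelike lens data determines time-probe data} to give an isomorphism of time-probe data in the sense used by \prettyref{thm:Time-probe data determines isometry class}.
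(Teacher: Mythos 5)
Your proposal is correct and is exactly the approach the paper takes: the paper's proof of this theorem is the single line "Combine \prettyref{prop:Broken timelike lens data determines time-probe data} and \prettyref{thm:Time-probe data determines isometry class}." Your write-up just spells out the hypothesis-checking bookkeeping that the paper leaves implicit.
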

\begin{proof}
Combine \prettyref{prop:Broken timelike lens data determines fireworks data} and \prettyref{thm:Fireworks data determines isometry class}.
\end{proof}

\section{Sky shadow rigidity}
\label{sec:Sky shadow rigidity}
We will now discuss another kind of data at the boundary of a Lorentzian manifold with boundary. Suppose that lightlike geodesics are sent out from a point. These will typically intersect the boundary in a lower-dimensional set, giving the \enquote{shadow} which would be formed if light rays were prevented from traveling through the point. The word \enquote{sky} is used in~\cite{Low88} to mean the set of lightlike geodesics through a point, so these shadows are in a sense shadows of skies on the boundary. The question is then, if we observe the shadows, can we reconstruct the manifold and its metric? In this section we will make the question precise, and show that indeed we can reconstruct the manifold and its metric up to a conformal factor, provided we impose the strong causality condition, a weak convexity condition, and demand that no lightlike geodesics have conjugate points.

We will begin by making the concept of a \enquote{sky shadow} precise.
\begin{defn}
Let \(\M\) be a Lorentzian manifold with boundary. The \emph{pre-sky} of a point \(p \in \M\) is the set of vectors \(\eta \in T\M\) such that \(\eta\) is tangent to a lightlike geodesic through \(p\).
\end{defn}
We call this a pre-sky since it is the preimage of a sky under the projection from the space of lightlike vectors to the space of skies. Intersecting the past half of the pre-sky of a point with the tangent bundle over the boundary gives the \enquote{past sky shadow} of the point. In other words it may be defined as follows.
\begin{defn}
The \emph{past sky shadow} of a point \(p \in \M\) is the set of \(\eta \in \TdM\) such that either \(\eta\) or \(-\eta\) is the initial velocity of a future-directed lightlike geodesic through \(p\). As previously, \(\TdM\) is the restriction of \(T\M\) to the boundary of \(\M\).
\end{defn}
Some past sky shadows are illustrated in \prettyref{fig:Past sky shadow} and \prettyref{fig:Past sky shadow 3D}.
\begin{defn}
The \emph{past sky shadow data} of a time-oriented Lorentzian manifold \(\M\) with boundary is the set of past sky shadows of points in the interior of \(\M\). We say that \(\M\) and \(\Mprime\) have \emph{isomorphic} past sky shadow data if \(\TdM \to \partial \M\) and \(\TdMprime \to \partial \Mprime\) are conformal as smooth vector bundles with Lorentzian metrics, by a conformal vector bundle isomorphism which
\begin{itemize}
\item identifies the tangent bundles of the boundaries, that is identifies \(T\partial \M\) with \(T\partial \Mprime\),
\item identifies the past sky shadow data of \(\M\) with the past sky shadow data of \(\Mprime\)\!.
\end{itemize}
\end{defn}
Note that this is a stronger notion of isomorphism than the one we used for fireworks data.

We begin by showing by an example that the past sky shadow data is in general not sufficient to determine a time-oriented Lorentzian manifold uniquely unless additional conditions are imposed.
\begin{example}
\label{ex:Periodic spacetime}
Let \(n \geq 3\) and let \(\left(S^{n-1},h\right)\) be the \((n-1)\)-sphere with the standard round unit sphere Riemannian metric \(h\), so that all unit speed geodesics are periodic with period \(2\pi\). For each positive real number \(T\) let
\[
\left(\M_{T},g_{T}\right)=\left(\left[0,T\right]\times S^{n-1},-dt^{2}+h\right).
\]
Then the lightlike geodesics through any point \(\left(t,p\right)\in \M_{T}\) are the same as the lightlike geodesics through \(\left(t+2\pi k,p\right)\) for every integer \(k\) (provided, of course, that \(t+2\pi k\in\left[0,T\right]\)). This means that \(\M_{T}\) has the same sky shadow data as \(\M_{T + 2\pi k}\) for all positive integers \(k\), even though they are non-conformal. Note that these manifolds \(\M_{T}\) are well-behaved in the sense that they are globally hyperbolic and have spacelike boundary. Note also that when \(T\) is sufficiently small, no lightlike geodesic in \(\M_T\) contains a pair of conjugate points.
\end{example}
We therefore need more information about a manifold to be able to reconstruct it from the sky shadow data. A condition that is technically convenient to work with is the condition that no lightlike geodesic contains a pair of conjugate points. In the family \({(\M_{T + 2 \pi k})}_{k \in \Z}\) in the above example, there is precisely one manifold in which no lightlike geodesics contain conjugate points, so if we know a priori that the manifold which we want to reconstruct has this property, then we might hope to be able to reconstruct it.

We are now ready to state and prove our second main theorem, which tells us that the past sky shadow data defined above uniquely determines the manifold structure and metric up to a conformal factor, provided the strong causality condition holds and no lightlike geodesics contain conjugate points, and provided we impose a weak convexity condition.
\begin{thm}\label{thm:Past sky shadow data determines conformal class}
Let \((\M_1, g_1)\) and \((\M_2, g_2)\) be strongly causal, compact, time-oriented Lorentzian manifolds of dimension \(n \geq 3\) with boundary. Suppose that both manifolds are such that
\begin{itemize}
\item no lightlike geodesic contains a pair of conjugate points, and
\item every interior point lies on a past-inextendible lightlike geodesic which intersects the boundary transversely.
\end{itemize}
If \(\M_1\) and \(\M_2\) have isomorphic past sky shadow data, then they are conformal.\\
In other words, the conformal class of any strongly causal, compact, time-oriented Lorentzian manifold of dimension \(n \geq 3\) with boundary with the above conditions on lightlike geodesics is uniquely determined by the past sky shadow data.
\end{thm}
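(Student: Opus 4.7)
The plan is to follow the recipe of \prettyref{sec:Reconstruction of smooth structures}, as foreshadowed there for this theorem: build a smooth manifold $\Omega$ of null Weingarten maps at the boundary, and for each realizing $(\M,g)$ a surjective submersion $\Theta \colon \Omega \to M$ defined by solving the Riccati equation, which descends to a bijection from an intrinsically-defined quotient of $\Omega$.

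The set $\Omega$ is constructed from the data as follows. For each past sky shadow $S$ in the data and each $(q,\eta) \in S$ at which $S$ is locally a smooth submanifold of $\NdM$ whose null generator is transverse to $\partial \M$ at $q$, one extracts a null Weingarten map $b_S(q,\eta)$: such an $S$ is locally the lift to $\NdM$ of the intersection with $\partial \M$ of a smooth null hypersurface in $\M$ (namely the past null cone of the originating interior point), together with its null generator field, and the second fundamental form of the associated screen produces the desired map on the quotient $T_q N / \R \eta$. Take $\Omega$ to be the set of tuples $(S, q, \eta, b_S(q,\eta))$ obtained this way; it is an open subset of a natural smooth bundle built from $\NdM$ and the data, and hence carries a canonical smooth structure.

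Given a realizing $(\M,g)$, define $\Theta(S,q,\eta,b)$ by solving the null Riccati equation
\[
b'(t) + b(t)^2 + R(t) = 0, \qquad b(0) = b,
\]
along the affinely parameterized null geodesic $\gamma$ starting at $q$ in direction $-\eta$ and going into $M$, and setting $\Theta(S,q,\eta,b) := \gamma(t_*)$ where $t_*$ is the first time at which the solution blows up. The no-conjugate-points hypothesis guarantees that for a tuple arising from the sky shadow of an interior point $p$ the first blow-up is exactly at $p$, so $\Theta$ is well-defined; it is smooth by smooth dependence on initial data, surjective by the transverse-cone convexity hypothesis, and admits local sections around each point of $M$ in the spirit of \prettyref{step:Choose submersion} of the proof of \prettyref{thm:Time-probe data determines isometry class}. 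The intrinsic equivalence relation on $\Omega$ given by $(S,q,\eta,b) \sim (S',q',\eta',b')$ if and only if $S = S'$ is then such that $\Theta$ descends to a bijection $\tilde\Theta \colon \Omega/{\sim} \to M$; injectivity reduces to the fact that $S(p) = S(p')$ implies $p = p'$, which follows from the no-conjugate-points and convexity hypotheses because the family of future-directed lightlike geodesics from $p$ reaching $\partial \M$ transversely is too rich to all pass through a second interior point.

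Applying the smooth-structure uniqueness argument of \prettyref{sec:Reconstruction of smooth structures} to two realizing manifolds then produces a diffeomorphism $M_1 \to M_2$ sending past sky shadows to past sky shadows, and hence identifying all unparameterized lightlike geodesics; since $n \geq 3$, this forces the conformal classes of $g_1$ and $g_2$ to agree on the interior, and the conformal equivalence is extended to the boundary exactly as in \prettyref{step:Conclude isometry including boundary} of the proof of \prettyref{thm:Time-probe data determines isometry class}, via~\cite[Theorem 5.3]{Chrusciel10}. I expect the main technical obstacle to lie in the first step, namely extracting the null Weingarten map from the purely conformal boundary data in such a way that the first blow-up point $\gamma(t_*)$ is an intrinsic invariant of the sky shadow data: the conformal ambiguity in $b$ must be compensated by the corresponding transformation of the Riccati equation, and verifying via the no-conjugate-points hypothesis that no spurious blow-up occurs before the apex of the cone is precisely what makes that hypothesis indispensable.
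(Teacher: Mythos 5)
Your overall approach is the paper's: build $\Omega$ from null Weingarten maps of sky shadows at the boundary, recover the originating interior point as the first blow-up of the Riccati (optical) equation, use the no-conjugate-points hypothesis to identify this blow-up with the cone apex, run the quotient recipe of \prettyref{sec:Reconstruction of smooth structures}, and finish at the boundary with~\cite[Theorem 5.3]{Chrusciel10}.

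The one substantive gap is the claim that $\Theta$ is smooth ``by smooth dependence on initial data.'' Smooth dependence of Riccati solutions on parameters does not by itself give smoothness --- or even continuity --- of the first blow-up time $t_*$, and establishing this is in fact the technical core of the paper's proof. The paper works in the opposite direction: it defines the forward map $\Psi \colon \oNM_i \to J$ by parallel transport of an interior null vector to the boundary followed by direct computation of the null Weingarten map there, which is manifestly smooth; it then separately shows that $\Psi$ is an immersion (by analyzing the expansion $\tr b(t,s) \sim 1/(s-t)$ near the apex and observing that moving the apex perturbs the trace at the boundary) and that $\Psi^{-1}$ is continuous (by compactifying the trace equation via $a=\arctan\theta$ and a sequential argument controlling the blow-up time). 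Only then does it conclude that $\Psi$ is a diffeomorphism onto $\Omega$, so that $\Theta = p_i \circ \Psi^{-1}$ is smooth. Your local-sections remark is of the right flavor and yields surjective submersivity once smoothness is in hand, but the smoothness needs the ODE estimates, not just the existence-and-uniqueness theory. Your closing worry about the conformal ambiguity in $b$ is a reasonable one; the paper resolves it by constructing $\Omega$ from the conformal class on $\TdM$, the inclusion $T\partial\M \hookrightarrow \TdM$, and the sky shadows, together with the observation that rescaling $\eta$ rescales $b_\eta$ by the same factor, so that the blow-up point of the correspondingly rescaled Riccati flow (and hence the reconstructed apex) is independent of the choice of representative.
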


As was the case with \prettyref{thm:Fireworks data determines isometry class}, the proof can be extended somewhat to cover some settings in which the manifold is not necessarily compact and the data is not necessarily given at the boundary. However, we consider only the case of compact manifolds with data given at the boundary.
\begin{proof}
The idea of the proof is to use the observation from \prettyref{sec:Reconstruction of smooth structures}, and construct a suitable manifold \(\Omega\) from the past sky shadow data of \(M_1\) and \(M_2\). This time, as opposed to in the proof of \prettyref{thm:Fireworks data determines isometry class}, we will not explicitly define an equivalence relation on \(\Omega\), and the submersions \(\Omega \to M_i\) will factor through vector bundles over \(M_i\). 

Let \(\TdM \to \partial \M\) denote a smooth vector bundle which is isomorphic to both \({\TdM_1 \to \partial \M_1}\) and \({\TdM_2 \to \partial \M_2}\). We introduce this bundle to emphasize that the construction of \(\Omega\) is independent of \(i\). Similarly, let \([g]\) denote the conformal class of the indefinite inner products on \(\TdM\) given by the isomorphisms with \(\TdM_1\) and \(\TdM_2\), and view \(T\partial \M\) as a subset of \(\TdM\). This is well-defined by definition of isomorphism of past sky shadow data. Let \(\NdM \to \partial \M\) denote the subbundle of \(\TdM\) consisting of lightlike vectors (which can be identified using the conformal class \([g]\)). 

We will begin by constructing a fiber bundle \(J\) which will be needed for the argument. This bundle \(J\) will consist of pairs \((\eta, b)\) where \(\eta \in \NdM\) is a lightlike vector based at some point in \(\partial \M\) and \(b\) is a vector space endomorphism of \(\qextraspace{\eta^\perp}{\eta}\). Here \(\eta^\perp\) denotes the space of vectors orthogonal to \(\eta\), as defined in \prettyref{app:Null geometry}. We have \(\R\eta \subseteq \eta^\perp\) since \(\eta\) is null, so the quotient makes sense. The explicit construction of \(J\), which will endow it with a fiber bundle structure, is as follows. View \({\NdM \oplus \TdM}\) as a vector bundle over \(\NdM\) and let \(\mathfrak P \subseteq \NdM \oplus \TdM\) be the subbundle defined by
\[\mathfrak P = \{(\eta, X) \in \NdM \oplus \TdM \mid [g](\eta, X) = 0\}.\]
Define an equivalence relation \(\asymp\) on \(\mathfrak P\) by letting \((\eta, X) \asymp (\eta, X + \lambda \eta)\) for all \(\lambda \neq 0\). Then \(\q{\mathfrak P}{\asymp}\) is a vector bundle over \(\NdM\). Let \(J\) be the endomorphism bundle of this vector bundle \(\q{\mathfrak P}{\asymp}\). Then \(J\) consists of pairs \((\eta, b)\) where \(\eta \in \NdM\) and \(b\) is a vector space endomorphism of \(\qextraspace{\eta^\perp}{\eta}\), as promised.

The past sky shadows are subsets of \(\TdM\). In the nicest case, a past sky shadow (after adding a zero section) is a line bundle over an embedded spacelike hypersurface \(\Sigma \subseteq \partial \M\), consisting of lightlike vectors which are orthogonal to \(\Sigma\). Even if this does not quite hold, the past sky shadows may have subsets of this form. We will collect these subsets in a set \(\mathcal S\). More precisely, let \(\mathcal S\) denote the set of line bundles \(N \to \Sigma\) where \(\Sigma \subseteq \partial \M\) is an embedded spacelike hypersurface and the total space \(N\) (after removing the zero section) is a subset of a past sky shadow such that all nonzero elements of \(N\) are transverse to \(\partial \M\) and orthogonal to \(\Sigma\). This set \(\mathcal S\) can be determined since we have an inclusion \(T\partial \M \hookrightarrow \TdM\) and a conformal metric for \(\TdM\). 
We will use the elements of \(\mathcal S\) to construct an interesting subset \(\Omega \subseteq J\). Consider an element \((N \to \Sigma) \in \mathcal S\). The image under the exponential map of a sufficiently small neighborhood of the zero section of \(N\) is an immersed null hypersurface \(\mathcal H\). For each \(\eta \in N\) we may compute the null Weingarten map (defined in \prettyref{app:Null geometry}) of \(\mathcal H\) with respect to \(\eta\). Call this null Weingarten map \(b(\eta)\). We define the set \(\Omega\) using an auxiliary set \(\widehat\Omega\): For each vector \(\eta \in N\), let \((\eta, b(\eta)) \in \widehat\Omega\). By doing this for all elements \((N \to \Sigma) \in \mathcal S\), we have constructed the set \(\widehat\Omega \subseteq J\).
With \(\pi_1 \colon (\eta, b) \mapsto \eta\), define
\[\Omega = \left\{(\eta, b) \in \widehat\Omega \mid \eta \text{ is an interior point of } \pi_1(\widehat\Omega) \subseteq \NdM \right\}.\]
Note that rescaling a lightlike vector \(\eta\) by some \(\lambda \neq 0\) rescales the null Weingarten map \(b(\eta)\) by the same factor \(\lambda\), so if \((\eta, b) \in \Omega\) then \((\lambda\eta, \lambda b) \in \Omega\) for all \(\lambda \neq 0\).

For \(i \in \{1, 2\}\), let \(M_i = \interior \M_i\) be the interior of \(\M_i\). Let \(NM_i\) denote the lightlike tangent bundle of \(M_i\), and let \(\oNM_i = NM_i \cap \oTM_i\), where \(\oTM_i\) is defined as in \prettyref{lem:Smooth time to boundary}. Note that \(\oNM_i\) is an open subset of \(NM_i\) since \(\oTM_i \subseteq TM_i\) is open.
We will now show that \(\Omega\) is an embedded submanifold of \(J\), diffeomorphic to both \(\oNM_1\) and \(\oNM_2\). To that end, fix \(i \in \{1, 2\}\).

\newcounter{stepnumber}
\newcommand{\step}{\protect\stepcounter{stepnumber}Step~\thestepnumber}

\stepheader{\step: Definition of a smooth map \(\Psi \colon \oNM_i \to J\) with \(\im(\Psi) = \Omega\).}
\begin{figure}
 \centering{}
 \begin{overpic}[width=0.5\textwidth]{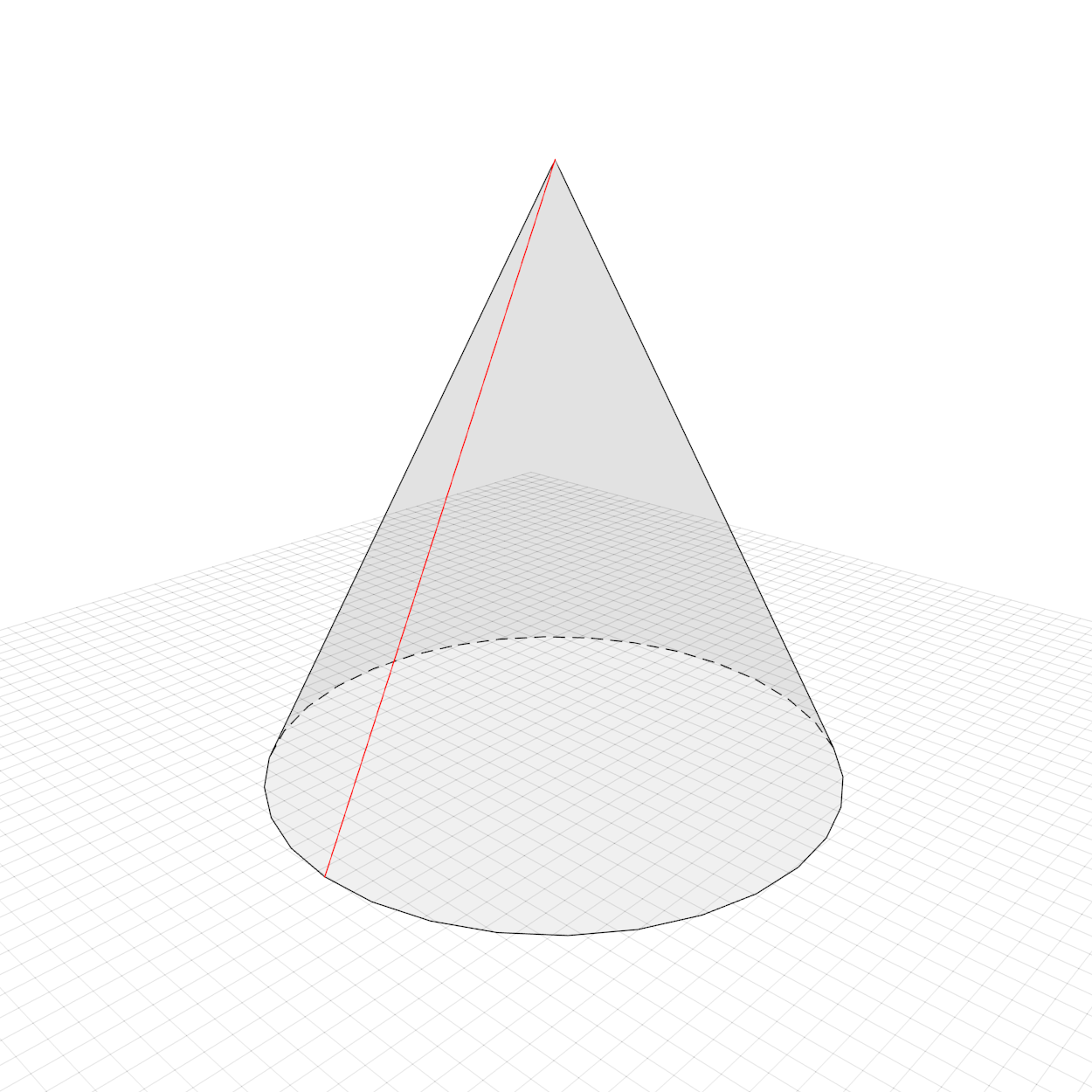}
  \put (57,97) {\(\eta\)}
  \put (37,35) {\(\alpha(\eta)\)}
  \put (50,8) {\(\Sigma\)}
  \put (89,35) {\(\partial \M_i\)}
  \linethickness{1pt}
  \put(30,20){\color{red}\vector(1.5,4.5){5}}
  \put(51,85){\color{red}\vector(1.5,4.5){5}}
 \end{overpic}
 \caption{The map \(\alpha\) flows each lightlike vector to the boundary, along the geodesic it generates. The past sky shadow of the base point of \(\eta\) is a line bundle over the spacelike submanifold \(\Sigma\).}
\label{fig:The map alpha}
\end{figure}
Consider the past lightlike half-geodesics through the base point of a vector \(\eta \in \oNM_i\), as shown in \prettyref{fig:The map alpha}. These half-geodesics form a null hypersurface which intersects the boundary of the manifold in a set \(\Sigma\). By parallel transport of \(\eta\) to the boundary along the geodesic it generates we obtain a vector \(\alpha(\eta)\), and we can compute the null Weingarten map \(b(\alpha(\eta))\) of the null hypersurface with respect to \(\alpha(\eta)\). This defines a map \(\Psi(\eta) = (\alpha(\eta), b(\alpha(\eta)))\). We will now prove that this map is smooth and that \(\im(\Psi) = \Omega\).

Let \(\Phi_g\) denote geodesic flow, let \(T_\partial\) be defined as in \prettyref{lem:Smooth time to boundary} and let \(\pi\) denote the projection \(\pi \colon T\M_i \to \M_i\). 
Define \(\mathcal T_\partial\) by
\[
\mathcal T_\partial(\eta)
=
\begin{cases}
-T_\partial(-\eta) \quad&\text{if \(\eta\) is future-directed,}\\
T_\partial(\eta) \quad&\text{if \(\eta\) is past-directed.}
\end{cases}\]
The map \(\alpha \colon \oNM_i \to \TdM_i\) can then be described by \(\alpha(\eta) = \Phi_g(\mathcal T_\partial(\eta), \eta)\).
This map flows lightlike vectors to the boundary, as shown in \prettyref{fig:The map alpha}.
The map is smooth since \(\mathcal T_\partial\) is smooth by \prettyref{lem:Smooth time to boundary}. In particular, for each \(p \in M_i\) the restriction \(\alpha_p\) of \(\alpha\) to \(\oNpM_i\) is smooth, and we interpret \(\alpha\) as a smooth family of smooth maps \(\alpha_p\) indexed by \(p \in M_i\). In particular, the derivatives of \(\alpha_{\pi(\eta)}\) at \(\eta\) depend smoothly on \(\eta\). Fix \(p \in M_i\) and let \(\mathfrak N = \oNpM_i \cap \mathcal T_\partial^{-1}(1)\). 
The restriction of \(\mathcal T_\partial\) to \(\oNpM_i\) is a submersion since \(\lambda \mathcal T_\partial(\lambda X) = \mathcal T_\partial(X)\) for all \(\lambda > 0\) and all \(X \in \oNpM_i\). This means that \(\mathfrak N \subset \oNpM_i\) is an embedded submanifold. The restriction of \(\alpha_p\) to the future-directed component of \(\mathfrak N\) is geodesic flow for time \(-1\), which is an embedding. The restriction of \(\pi \circ \alpha_p\) to \(\mathfrak N\) is simply the exponential map composed with negation, which is an immersion since we have assumed that no lightlike geodesics have conjugate points. Now \(\alpha_p(\oNpM_i)\) is a line bundle (except for missing a zero section) over the immersed hypersurface \(\pi(\alpha_p(\oNpM_i)) \subset \partial \M_i\), consisting of vectors which are lightlike and normal to the hypersurface. For each \(\eta \in \oNM_i\), we define \(\Psi(\eta) = (\alpha(\eta), b(\alpha(\eta))) \in \Omega\). 
Note that the image of \(\alpha_{\pi(\eta)}\) is a spacelike section of the null hypersurface for which \(b(\alpha(\eta))\) is the null Weingarten map. This means that \(b(\alpha(\eta))\) is a smooth function of the derivatives of \(\alpha_{\pi(\eta)}\) at \(\eta\). These derivatives depend smoothly on \(\eta\), so we conclude that \(\Psi(\eta)\) depends smoothly on \(\eta\).

To show that the image of \(\Psi\) is all of \(\Omega\), we first show that each \(\alpha_p\) is an embedding, so that every embedded \((n-2)\)-submanifold in the image of \(\alpha_p\) is the diffeomorphic image of a submanifold of \(\oNpM_i\). Since \(\alpha_p(\lambda\eta) = \lambda \alpha_p(\eta)\) for \(\lambda > 0\) and since \(\mathfrak N\) is transverse to all curves \(\lambda \mapsto \lambda \eta\), we get diffeomorphic splittings \(\oNpM \cong \mathfrak N \times \R^+\) and \(\alpha_p(\oNpM) \cong \alpha_p(\mathfrak N) \times \R^+\), in terms of which we can express \(\alpha_p((\eta, \lambda)) = (\alpha_p(\eta), \lambda)\). Since \(\left.\alpha_p\right|_{\mathfrak N}\) is an embedding, \(\alpha_p\) is also an embedding.
To see that \(\im(\Psi) = \Omega\), choose \((\eta, b) \in \Omega\). Since \(\Psi(\zeta) = -\Psi(-\zeta)\) for all \(\zeta\), we may suppose without loss of generality that \(\eta\) is future-directed. By definition of \(\Omega\) the pair \((\eta, b)\) comes from the past sky shadow of some point \(p \in M_i\). Let \(\zeta \in N_pM_i\) be such that there is a geodesic starting with \(\eta\) and ending with \(\zeta\). This geodesic intersects \(\partial \M_i\) transversely since \(\eta\) is an interior point of \(\pi(\widehat\Omega)\), so \(\zeta \in \oNpM_i\). Then by definition of \(\Psi\) we have \(\Psi(\zeta) = (\eta, b)\).

\stepheader{Interlude: A characterization of an inverse of \(\Psi\).}
In the next step we will show that \(\Psi\) is injective. Its inverse can be characterized using the optical equation for the null Weingarten map. We begin by describing the idea. Consider a congruence of past-inextendible future-directed lightlike geodesics through a point \(p \in M_i\). Let \(\gamma \colon [0, T] \to \M_i\) be one of the geodesics, with an affine parameterization where \(\gamma(0) \in \partial \M_i\) and \(\gamma(T) = p\). For \(t \in [0, T]\) let \(b(t)\) be the null Weingarten map of the congruence along \(\gamma\). Then \(\Psi(\dot\gamma(T)) = (\dot\gamma(0), b(0))\). Our task is to reconstruct \(\dot\gamma(T)\) from \((\dot\gamma(0), b(0))\). Of course, reconstructing \(\gamma\) from an initial velocity \(\dot\gamma(0)\) is simply a matter of existence and uniqueness of geodesics. The nontrivial part is to reconstruct the time \(T\). To do this, we will use the optical equation from \prettyref{app:Null geometry}, which says that
\[b'(t) = -{b(t)}^2 - R_{\dot\gamma(t)}\]
with \('\) denoting covariant derivative in direction \(\dot\gamma(t)\).
This, together with initial data \(b(0)\), forms an initial value problem, which has a unique solution. The idea is now that something very significant happens to \(b(t)\) as \(t \to T\); a computation in normal coordinates centered at the apex \(p = \gamma(T)\) shows that
\[\lim_{t \to T^-} \tr b(t) = -\infty.\]
For future use we note that the computation also shows that the trace-free part of \(b(t)\) tends to zero:
\[\lim_{t \to T^-} \left(b(t) - \frac{\tr b(t)}{n - 2} \id\right) = 0.\]
This means that the existence interval for the solution to the initial value problem cannot include \(T\). On the other hand, the null Weingarten map of the geodesic congruence is well defined and satisfies the differential equation for all \(t \in [0, T)\), so the existence interval must be precisely \([0, T)\). This is how we can reconstruct \(T\) from \((\dot\gamma(0), b(0))\). We will now rephrase this in more precise language.

Given \((\eta, b_0) \in \Omega\), we wish to define \(\Psi^{-1}(\eta, b_0) \in \oNM_i\). The function \(\Psi^{-1}\) will later be shown to be an inverse to \(\Psi\). We begin by defining \(\Psi^{-1}(\eta, b_0)\) for future-directed \(\eta\), and later extend the definition by letting \(\Psi^{-1}(-\eta, -b_0) = -\Psi^{-1}(\eta, b_0)\). Let \(\gamma_\eta\) be the inextendible geodesic with \(\dot\gamma_\eta(0) = \eta\). 
The following is an initial value problem for a tensor along \(\gamma_\eta\), with \('\) denoting covariant derivative in direction \(\dot\gamma_\eta\):
\begin{equation}
\label{eq:Optical initial value problem}
\tag{$\star$}
\begin{cases}
b'(t) = -{b(t)}^2 - R_{\dot\gamma_\eta(t)},\\
b(0) = b_0.
\end{cases}
\end{equation}
Let \(T\) be the existence time of the maximal solution to this initial value problem and define \(\Psi^{-1}(\eta, b_0) = \dot \gamma_\eta(T)\). Note that \(\Psi^{-1}\) commutes with multiplication by \(\lambda > 0\) for geometrical reasons; multiplication by positive real numbers corresponds to a change of parameterization of \(\gamma\). We extend \(\Psi^{-1}\) to the case when \(\eta\) is past-directed by letting \(\Psi^{-1}(\eta, b_0) = -\Psi^{-1}(-\eta, -b_0)\). Then \(\Psi^{-1}\) commutes with multiplication by any \(\lambda \neq 0\).

For general initial data the existence interval could be cut off by the boundary of the manifold. This would happen for instance if we let \(\M_i\) be a compact part of Minkowski space, and use initial data \(b_0 = 0\). However, for initial data from \(\Omega\) we know a priori that the existence interval will end due to the trace of \(b\) diverging.

\stepheader{\step: \(\Psi\) is injective, and its inverse is continuous.}
Since both \(\Psi\) and \(\Psi^{-1}\) commute with multiplication by nonzero real numbers, we may assume without loss of generality that the vectors \(\eta\) considered below are future-directed.

We will now show that the function \(\Psi^{-1}\) defined above is actually an inverse to \(\Psi\), and that it is continuous.
That \((\eta, b_0)\) is in the image of \(\Psi\) means that it comes from a pre-sky of some point \(p \in M_i\). The projection of this pre-sky by \(\pi \colon TM_i \to M_i\), or equivalently the image under the exponential map of the light cone at \(p\), is an immersed null hypersurface, since we have assumed that no lightlike geodesics have conjugate points. Letting \(\gamma\) be the geodesic with \(\dot\gamma(0) = \eta\) and \(\gamma(T) = p\), and letting \(b\) be the null Weingarten map of the immersed null hypersurface along \(\gamma\), we get a solution to the initial value problem used to define \(\Psi^{-1}\). This solution is bounded on compact subsets of \([0, T)\) and blows up at \(T\), which means that \(\Psi^{-1}(\eta, b_0) = \dot\gamma(T)\), proving that \(\Psi^{-1}\) is an inverse to \(\Psi\). Note that this was easy to show only because the projected pre-sky is an immersed hypersurface, for which we needed that there are no conjugate points. 

The function \(\Psi^{-1}\) can be written as
\[\Psi^{-1}(\eta, b_0) = \Phi_g(T(\eta, b_0), \eta)\]
where \(\Phi_g\) is the geodesic flow and \(T(\eta, b_0)\) is the existence time of solutions to the initial value problem~\eqref{eq:Optical initial value problem}. To see that \(\Psi^{-1}\) is continuous, it is then sufficient to show that \(T(\eta, b_0)\) depends continuously on \(\eta\) and \(b_0\). We will remove all geometry from the problem and reduce it to a question about ordinary differential equations. Let \(\gamma_\eta\) be the geodesic with \(\dot\gamma_\eta(0) = \eta\). The null Weingarten map along \(\gamma\) is an endomorphism of \(\qextraspace{T^\perp\gamma_\eta}{T\gamma_\eta}\), as discussed in \prettyref{app:Null geometry}. Choose a parallel frame \(X_1(t, \eta), \ldots, X_{n-2}(t, \eta)\) for \(\qextraspace{T^\perp\gamma_\eta}{T\gamma_\eta}\) along \(\gamma_\eta\) and let \(R(t, \eta)\) be the matrix representation of \(R_{\dot\gamma_\eta(t)}\) in this frame. Note that \(X_i(t, \eta)\) and \(R(t, \eta)\) can be chosen to depend smoothly on \(t\) and \(\eta\). Using this frame, the initial value problem~\eqref{eq:Optical initial value problem} can be written as an initial value problem for matrices:
\[\begin{cases}
\dot b(t, \eta, b_0) = -b^2(t, \eta, b_0) - R(t, \eta),\\
b(0, \eta, b_0) = b_0.
\end{cases}\]
Here we view \(\eta\) and \(b_0\) as parameters for the initial value problem, and \(\dot b\) is the derivative of \(b\) with respect to \(t\). Let \(\theta\) and \(\sigma\) be the trace and trace-free part of \(b\), respectively:
\[\theta = \tr(b),\]
\[\sigma = b - \frac{\theta}{n-2} \id.\]
The differential equation for \(b\) splits into an equation for \(\theta\) and an equation for \(\sigma\) as follows:
\[\begin{cases}
\dot \theta(t, \eta, b_0) = -\frac{\theta^2}{n - 2} - \tr(\sigma^2) - r(t, \eta),\\
\dot \sigma(t, \eta, b_0) = F(\theta, \sigma, \eta, t).
\end{cases}\]
We have suppressed the arguments \((t, \eta, b_0)\) in the right hand side of the first equation for clarity. The exact form of the right hand side of the second equation is not important for our argument, so we have encapsulated it into a function \(F\). The only property of \(F\) we will use is that it is smooth. We have used \(r(t, \eta)\) to denote the trace of \(R(t, \eta)\), which is smooth. Recall that the trace of \(b\) diverges to \(-\infty\) at the endpoint of the existence interval (for initial data in \(\Omega\)), so we want to know when \(\theta \to -\infty\). We change variables by letting
\[a = \arctan(\theta).\]
The resulting system of differential equations is
\[\begin{cases}
\dot a(t, \eta, b_0) = -\frac{\sin^2(a)}{n - 2} - \cos^2(a)\left(\tr(\sigma^2) + r(t, \eta)\right),\\
\dot \sigma(t, \eta, b_0) = F(\tan(a), \sigma, \eta, t).
\end{cases}\]
What we are interested in is the time \(T(\eta, b_0)\) after which \(a \to -\pi/2\). As long as \(a \neq \pm\frac{\pi}{2}\) and \(\sigma\) is bounded, this is a smooth system of ordinary differential equations. This means that \(a\) and \(\sigma\) are smooth functions on the set
\[\Gamma = \left\{(t, \eta, b_0) \mid 0 < t < T(\eta, b_0)\right\},\]
and moreover that \(\Gamma\) is open.
We have
\begin{equation}
\label{eq:Finiteness of a}
a(t, \eta, b_0) > -\frac{\pi}{2} \quad \text{ if }\ 0 < t < T(\eta, b_0).
\end{equation}
We can also derive an upper bound (local in \(\eta\)) for the existence time when the initial value for \(a\) is sufficiently close to \(-\frac{\pi}{2}\). Note that \(r(t, \eta) = \Ric(\dot\gamma_\eta(t), \dot\gamma_\eta(t))\) where \(\gamma_\eta\) is the geodesic with initial velocity \(\eta\). This means that for \(\xi\) close to \(\eta\) the functions \(r(\cdot, \xi)\) are uniformly bounded, say by \(C > 0\). Moreover, \(\sigma\) is symmetric so \(\tr(\sigma^2) \geq 0\). Hence the differential equation for \(a\) implies that
\[\dot a(t, \xi, b_0) < -\frac{\sin^2(a)}{n - 2} + C\cos^2(a)\]
for \(\xi\) in a neighborhood of \(\eta\). This means for each \(\epsilon > 0\) there is a \(\delta > 0\) such that
\begin{equation}\label{eq:Existence bound}
a(\tau, \xi, b_0) < -\frac{\pi}{2} + \delta \implies \dot a(t, \xi, b_0) < -1 + \epsilon \text{ for all } t > \tau
\end{equation}
for all \(b_0\) and for \(\xi\) in a neighborhood of \(\eta\).
For \(\epsilon < 1\) this means that \(a\) must reach the value \(-\frac{\pi}{2}\) within time \((a(\tau, \xi, b_0) + \frac{\pi}{2})/(1 - \epsilon)\). Summarizing, if \(a(\tau, \xi, b_0) < -\frac{\pi}{2} + \delta\) for some \(\tau\) then \(T(\xi, b_0) < \tau + (a(\tau, \xi, b_0) + \frac{\pi}{2})/(1 - \epsilon)\).

To see that \(T\) is continuous on \(\Omega\), consider a convergent sequence \(x_i \to x\) in \(\Omega\). (For clarity of notation, we now abbreviate pairs of parameters \((\eta, b_0)\) by single letters.) Then the sequence \({(T(x_i))}_{i=1}^\infty\) must have a convergent subsequence in the sequentially compact space \([0, \infty]\). We will show that each such subsequence has the limit \(T(x)\), thereby proving that \(T\) is continuous. Pass to a subsequence and let
\[T_\infty = \lim_{i \to \infty} T(x_i).\]
We will show that \(T_\infty = T(x)\). If it were the case that \(T_\infty < T(x)\) then \((T_\infty, x)\) would belong to the open set \(\Gamma\). The function \(a\) is continuous on this set so then it would hold that
\[a(T_\infty, x) = \lim_{i \to \infty} a(T(x_i), x_i) = -\frac{\pi}{2}.\]
However by~\eqref{eq:Finiteness of a} this means that \(T_\infty \geq T(x)\), contradicting the assumption that \(T_\infty < T(x)\). We will now show that the reverse inequality also holds, so suppose for contradiction that \(T_\infty > T(x)\). 
Let \(\epsilon > 0\) and let \(\delta > 0\) be such that~\eqref{eq:Existence bound} holds in a neighborhood of \(x\). Let \(\beta \in (0, \delta)\) be arbitrary. Since
\[\lim_{t \to {T(x)}^-} a(t, x) = -\frac{\pi}{2}\]
we may choose \(\mathfrak t < T(x)\) depending on \(\beta\) such that \(a(\mathfrak t, x) < -\frac{\pi}{2} + \frac{\beta}{2}\).
By continuity of \(a\) on \(\Gamma\) it then holds that
\[\lim_{i \to \infty} a(\mathfrak t, x_i) < -\frac{\pi}{2} + \frac{\beta}{2}\]
so for all sufficiently large values of \(i\) it holds that \(a(\mathfrak t, x_i) < -\frac{\pi}{2} + \beta\). Hence by~\eqref{eq:Existence bound} and by choice of \(\epsilon\), \(\delta\), and \(\beta\)
\[T(x_i) < \mathfrak t + \frac{\beta}{1 - \epsilon}.\]
Taking the limit as \(i \to \infty\) we obtain
\[T_\infty \leq \mathfrak t + \frac{\beta}{1 - \epsilon}.\]
Since \(\beta \in (0, \delta)\) was arbitrary and \(\mathfrak t < T(x)\) for every \(\beta\), this means that
\[T_\infty \leq T(x).\]
This completes the proof that \(T_\infty = T(x)\), proving that \(T\) is indeed continuous.

We have now shown that the existence time of solutions to the initial value problem~\eqref{eq:Optical initial value problem} depends continuously on the initial values and parameters. Since \(\Psi^{-1}\) is defined using geodesic flow for this existence time, we have shown that \(\Psi^{-1}\) is continuous.

\stepheader{\step: \(\Psi\) is an immersion}
\newcommand{\immersionstepnumber}{\thestepnumber}
Since \(\Psi(-\zeta) = -\Psi(\zeta)\) for all \(\zeta \in \oNM_i\) we may without loss of generality consider only past-directed vectors. Let \(\eta \in \oNM_i\) be past-directed. We will show that the kernel of the tangent map \(\Psi_*(\eta)\) is trivial. We start by finding a hyperplane on which this tangent map is injective. Let \(\Phi_g\) denote the geodesic flow on \(N\M_i\). Let \(T_\partial \colon \oNM_i \to \R\) be the time-to-boundary function from \prettyref{lem:Smooth time to boundary}. Note that the composition of \(\Psi\) with the projection \(\pi_1 \colon (\eta, b) \mapsto \eta\) is the map \(\eta \mapsto \Phi_g(T_\partial(\eta), \eta)\). By \prettyref{lem:Smooth time to boundary} we know that \(T_\partial\) is smooth. It is also a submersion since \({\lambda T_\partial(\lambda X) = T_\partial(X)}\) for all \(\lambda > 0\). This means that its level sets are hypersurfaces in \(\oNM_i\). Let \(\mathfrak L\) be the level set at level \(\tau = T_\partial(\eta)\). This means that for \(\xi \in \mathfrak L\) it holds that \(\pi_1(\Psi(\xi)) = \Phi_g(\tau, \xi)\). Since \(\Phi_g(\tau, \cdot\,)\) is a diffeomorphism for fixed \(\tau\), this means that \(\pi_1 \circ \Psi\) restricted to \(\mathfrak L\) is a diffeomorphism onto its image. Hence the rank of the tangent map \(\Psi_*\) at \(\eta\) must be at least \(\dim(\mathfrak L) = \dim(\oNM_i) - 1\). We will now consider a direction transverse to \(\mathfrak L\).

A particularly nice curve through \(\eta\) transverse to \(\mathfrak L\) is obtained by parallel transport of \(\eta\) along \(\gamma_\eta\). In other words, the curve \(s \mapsto \dot \gamma_\eta(s)\) is transverse to \(\mathfrak L\). Note that \(\pi_1(\Psi(\dot\gamma_\eta(s)))\) is constant and that \({{(\pi_1)}_* \! \circ \Psi_*}\) is injective on \(T_\eta \mathfrak L\), so if the derivative of \(s \mapsto \Psi(\dot\gamma_\eta(s))\) at \(0\) is nonzero, then it is linearly independent of \(\Psi_*(T_\eta\mathfrak L)\), and we will have shown that \(\Psi\) is immersive at \(\eta\). As in the previous case we will compose with a projection. Let \(\pi_2 \colon (\eta, b) \mapsto b\). We will show that the derivative of \(s \mapsto \pi_2(\Psi(\dot\gamma_\eta(s)))\) at \(0\) is nonzero. To do this, we will investigate what happens to the null Weingarten map \(b\) at the boundary when we move the apex of the pre-sky from \(\gamma_\eta(0)\) to \(\gamma_\eta(s)\). The first step consists of using the optical equation for \(b\) to show that \(b\) at the boundary changes if and only if \(b\) changes at any other point along \(\gamma_\eta\) before the apex. The second step is to show that \(b\) changes in a neighborhood of \(\gamma_\eta(0)\) when we move the apex.

\begin{figure}
 \centering{}
 \begin{overpic}[width=0.5\textwidth]{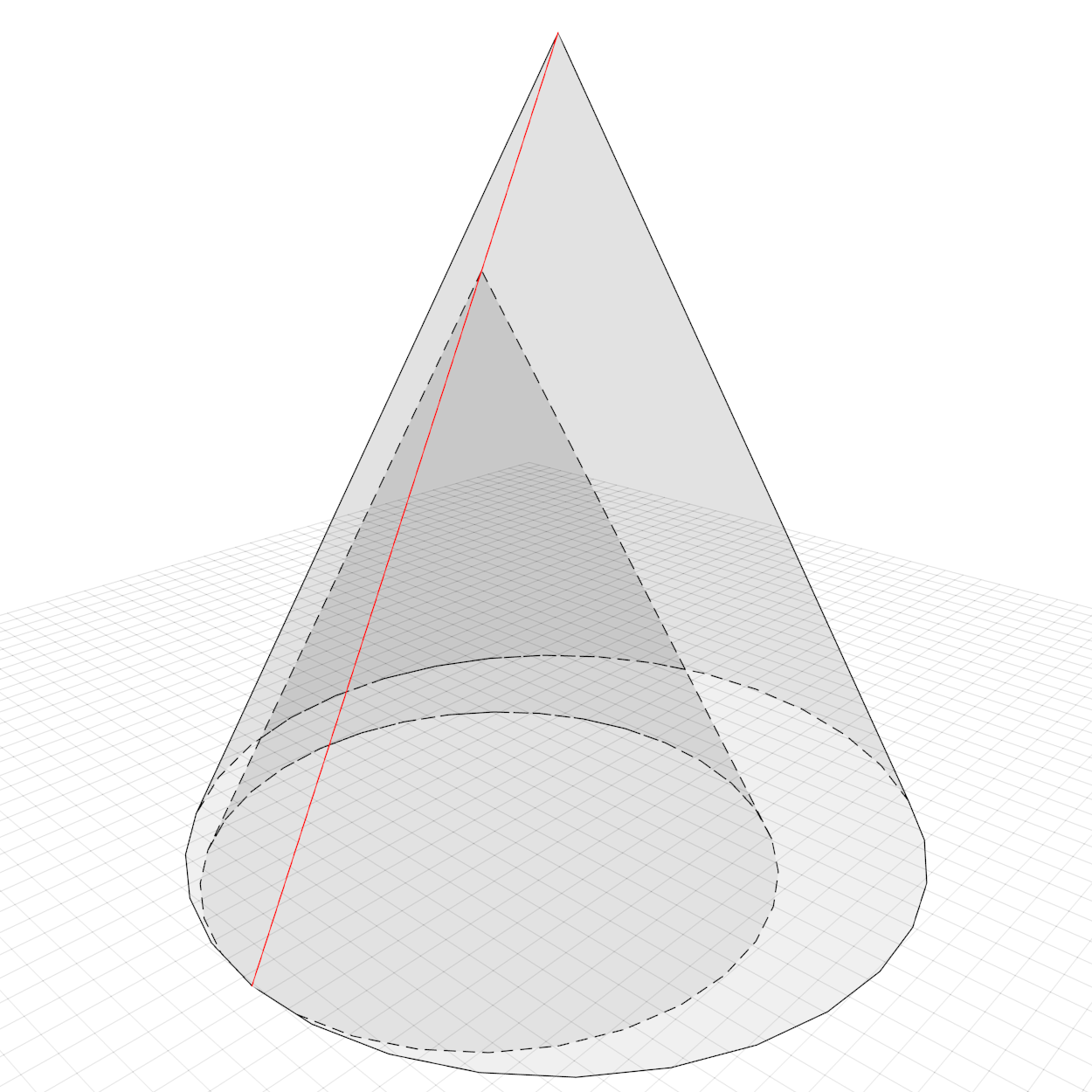}
  \put (52,97) {\(\gamma_\eta(0)\)}
  \put (45,75) {\(\gamma_\eta(s)\)}
  \put (-2,5) {\(\gamma_\eta(T_\partial(\eta))\)}
  \put (95,40) {\(\partial \M_i\)}
 \end{overpic}
 \caption{The past sky shadow of the point \(\gamma_\eta(s)\) differs from the past sky shadow of the point \(\gamma_\eta(s)\) for \(s > 0\). In Step \immersionstepnumber{} of the proof of \prettyref{thm:Past sky shadow data determines conformal class}, the null Weingarten map of (the image under the exponential map of) the lightcone of \(\gamma_\eta(s)\) evaluated at \(\gamma_\eta(t)\) (for \(t > s \geq 0\)) is denoted \(b(t, s)\). What is shown in that step is that the derivative of \(b(T_\partial(\eta), s)\) with respect to \(s\), evaluated at \(s = 0\), is nonzero.}
\label{fig:Nested cones}
\end{figure}

Let \(b(\cdot, s)\) denote the null Weingarten map (along \(\gamma_\eta\)) of the lightcone of the point \(\gamma_\eta(s)\) (or, more precisely, of the image of the lightcone under the exponential map). 
Two such lightcones are shown in \prettyref{fig:Nested cones}.
As before, for each \(s\) the map \(b(\cdot, s)\) satisfies the differential equation
\[b'(t, s) = -{b(t, s)}^2 - R_{\dot\gamma_\eta(t)},\]
where \('\) denotes covariant derivative in direction \(\dot\gamma_\eta(t)\), in other words derivative with respect to \(t\).
We do not have suitable initial data at \(t = s = 0\) for this equation since the trace of \(b\) diverges as \(t \to s\). However, for each \(\epsilon > 0\) and each \(0 \leq s < \epsilon\), we can use \(b(\epsilon, s)\) as initial data, and evolve \(b\) from there: For \(\epsilon\) to be fixed later, let \(\tilde b_s\) be the solution of the initial value problem
\[\begin{cases}
\tilde b_s'(t) = -{\tilde b_s(t)}^2 - R_{\dot\gamma_\eta(t)},\\
\tilde b_s(\epsilon) = b(\epsilon, s).
\end{cases}\]
Let \(\tau = T_\partial(\eta)\). Now \(\pi_2(\Psi(\dot\gamma_\eta(s))) = \tilde b_s(\tau)\). By the theory of ordinary differential equations, the value of \(\tilde b_s(\tau)\) depends diffeomorphically on the initial data \(b(\epsilon, s)\). This means that if \(\frac{d}{ds} b(\epsilon, s) \neq 0\), then \(\frac{d}{ds} \pi_2(\Psi(\dot\gamma_\eta(s))) \neq 0\), which is what is needed to complete the proof of the claim that \(\Psi\) is an immersion. In fact, it is true that \(\frac{d}{ds} b(\epsilon, s) \neq 0\) at \(s = 0\) for all sufficiently small \(\epsilon > 0\). To see this, take the trace of the differential equation for \(b\), letting \(\theta(t, s) = \tr(b(t, s))\):
\[\dot\theta(t, s) = -\frac{{\theta(t, s)}^2}{n-2} - \tr(R_{\dot\gamma_\eta(t)}).\]
Recall that what happens for \(t > s\) as \(t \to s\) for fixed \(s\) is that \(\theta(t, s) \to -\infty\). Since the curvature term \(R\) and all its derivatives are locally bounded, this means that
\[\theta(t, s) = \frac{1}{s - t} + e(s, t)\]
for some error term \(e\) with locally bounded derivatives (when \(s\) and \(t\) are close to \(0\)), and hence
\[\frac{d}{ds} \theta(t, s) = -\frac{1}{{(s - t)}^2} + \frac{d}{ds} e(s, t) \neq 0\]
for all \(t\) and \(s\) sufficiently close to zero. In particular, if we let \(\epsilon > 0\) be sufficiently small and let \(0 \leq s < \epsilon\) then
\[\frac{d}{ds} \theta(\epsilon, s) \neq 0.\]
This shows that \(\frac{d}{ds} \pi_2(\Psi(\dot\gamma_\eta(s))) \neq 0\), completing the proof of the claim.

\stepheader{Conclusion}
Recall that we are considering two manifolds \(\M_1\) and \(\M_2\) with interiors denoted \(M_1\) and \(M_2\) and with the same past sky shadow data, and that we constructed a fiber bundle \(J\) and a set \(\Omega \subseteq J\) in the beginning of the proof. We have shown above that \(\Omega\) is an embedded submanifold of \(J\) and that each \(\oNM_i\) is diffeomorphic to \(\Omega\) by a map \(\Psi_i\). 
(To see the connection with the idea in \prettyref{sec:Reconstruction of smooth structures} we can define \(\Theta_i\) by composing \(\Psi_i^{-1}\) with the projection \(p_i \colon \oNM_i \to M_i\). We will not use the notation \(\Theta_i\), but rather work directly with the composition.)
We would now like to find a map \(\psi\) making the following diagram commute:
\[\begin{tikzpicture}[every node/.style={midway}]
\matrix[column sep={5em,between origins}, row sep={2em}] at (0,0) {
	\node(NM1) {\(\oNM_1\)}; & \node(Omega) {\(\Omega\)}; & \node(NM2) {\(\oNM_2\)}; \\

	\node(M1) {\(M_1\)}; &
	; &
	\node(M2) {\(M_2\)};\\
};

\draw[->]					(NM1) 	-- (Omega)	node[anchor=south]	{\(\Psi_1\)};
\draw[->]					(Omega) -- (NM2)		node[anchor=south]	{\(\Psi_2^{-1}\)};
\draw[->]					(NM1) 	-- (M1)			node[anchor=east] 	{\(p_1\)};
\draw[->]					(NM2)		-- (M2)			node[anchor=east]		{\(p_2\)};
\draw[dashed,->]	(M1)		-- (M2)			node[anchor=south]	{\(\psi\)};
\end{tikzpicture}\]

To show that such a map exists, it is sufficient to show that if \(p_1(\eta) = p_1(\xi)\) then \(p_2(\Psi_2^{-1}(\Psi_1(\eta))) = p_2(\Psi_2^{-1}(\Psi_1(\xi)))\). This, however, is almost tautological: That \(p_1(\eta) = p_1(\xi)\) means that \(\eta\) and \(\xi\) are based at the same point. This means that \(\Psi_1(\eta)\) and \(\Psi_1(\xi)\) come from the same past sky shadow. This in turn means that \(\Psi_2^{-1}(\Psi_1(\eta))\) and \(\Psi_2^{-1}(\Psi_1(\xi))\) are different vectors at the same point, so that \(p_2(\Psi_2^{-1}(\Psi_1(\eta))) = p_2(\Psi_2^{-1}(\Psi_1(\xi)))\).
Since we have assumed that every interior point of \(\M_i\) lies on a past-inextendible lightlike geodesic which intersects the boundary transversely, the projections \(p_i\) are surjective. Hence there is a unique function \(\psi\) making the above diagram commute. Since the \(\oNM_i\) are open submanifolds of \(NM_i\), the projections are surjective submersions. This means that \(\psi\) is smooth. Repeating the argument with the roles of \(M_1\) and \(M_2\) reversed gives a smooth inverse to \(\psi\), so \(\psi\) is a diffeomorphism. Since the diagram commutes and the fibers of \(\oNM_i\) are nonempty open submanifolds of the fibers of \(NM_i\), the diffeomorphism \(\psi\) has to be conformal.
By an application of~\cite[Theorem 5.3]{Chrusciel10} as in Step~\ref{step:Conclude isometry including boundary} in the proof of \prettyref{thm:Fireworks data determines isometry class}, the manifolds \(\M_1\) and \(\M_2\) including boundaries are conformal, completing the proof.
\end{proof}

\section{Broken lightlike scattering rigidity}
\label{sec:Broken lightlike scattering rigidity}
This section concerns our final rigidity theorem, using \enquote{broken lightlike scattering data}. This data is defined similarly to the fireworks data and the broken timelike lens data, except using only lightlike geodesics. The fact that the geodesics are lightlike means that their lengths are always zero, so there is no point in including these lengths in the data.
\begin{defn}
Let \(\M\) be a time-oriented Lorentzian manifold with boundary \(\partial \M\). The \emph{broken lightlike scattering data} of \(\M\) is the set of pairs \((\xi, \eta) \in \TdM \times \TdM\) such that
\begin{itemize}
\item \(\xi\) is future-directed and lightlike,
\item \(\eta\) is future-directed and lightlike,
\item the geodesic starting with \(\xi\) intersects the geodesic ending with \(\eta\).
\end{itemize}
\end{defn}

We will obtain the rigidity theorem in this section as a corollary of \prettyref{thm:Past sky shadow data determines conformal class} by constructing the past sky shadow data from the broken lightlike scattering data. To do this we will also need the \enquote{lightlike scattering data}, which we call \enquote{unbroken lightlike scattering data} for emphasis. This data is somewhat simpler than the broken lightlike scattering data in that it encodes information about lightlike geodesics which are not allowed to have any breakpoints.
\begin{defn}
Let \(\M\) be a time-oriented Lorentzian manifold with boundary \(\partial \M\). The \emph{(unbroken) lightlike scattering data} of \(\M\) is the set of pairs \((\xi, \eta) \in \TdM \times \TdM\) such that
\begin{itemize}
\item \(\xi\) is future-directed and lightlike,
\item \(\eta\) is future-directed and lightlike,
\item there is a geodesic starting with \(\xi\) and ending with \(\eta\).
\end{itemize}
\end{defn}

The manifolds \(\M_{T}\) of \prettyref{ex:Periodic spacetime} show that the broken and unbroken lightlike scattering data is in general not sufficient to determine the conformal class of a manifold. However, if we assume that lightlike geodesics never refocus, in other words that no pair of lightlike geodesics intersect in more than one point, then we can use the broken lightlike scattering data together with the unbroken lightlike scattering data to construct the past sky shadow data. If we, in addition, have the conditions necessary for applying \prettyref{thm:Past sky shadow data determines conformal class}, then we do get the conclusion that the conformal class is uniquely determined by the data. This is what we will do now.
\begin{prop}\label{prop:Broken lightlike scattering data determines past sky shadow data}
Let \((\M_1, g_1)\) and \((\M_2, g_2)\) be strongly causal, compact, time-oriented Lorentzian manifolds of dimension \(n \geq 3\) with boundary. Suppose that they are such that 
\begin{itemize}
\item no two points are connected by two different lightlike geodesics,
\item all lightlike geodesics are achronal.
\end{itemize}
Suppose that \(\M_1\) and \(\M_2\) have isomorphic broken and unbroken lightlike scattering data in the sense that \(\TdM_1 \to \partial \M_1\) and \(\TdM_2 \to \partial \M_2\) are conformal as smooth vector bundles with Lorentzian metrics, by a conformal vector bundle isomorphism which
\begin{itemize}
\item identifies the tangent bundles of the boundaries, that is identifies \(T\partial \M_1\) with \(T\partial \M_2\),
\item identifies the broken lightlike scattering data of \(\M_1\) with the broken lightlike scattering data of \(\M_2\),
\item identifies the unbroken lightlike scattering data of \(\M_1\) with the unbroken lightlike scattering data of \(\M_2\).
\end{itemize}
Then \(\M_1\) and \(\M_2\) have isomorphic past sky shadow data.
\end{prop}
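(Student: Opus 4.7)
The plan is to reconstruct the past sky shadow data of each \(\M_i\) explicitly from its broken and unbroken lightlike scattering data, after which the conformal bundle isomorphism identifying these data will transfer the reconstruction. The crucial new ingredient, and the step that I expect to carry the main weight of the argument, is the following geometric lemma: under the hypotheses of the proposition, if three pairwise distinct future-directed lightlike geodesics in \(\M_i\) pairwise intersect, then the three intersection points coincide. Without such a statement the reconstruction collapses, since in a general Lorentzian manifold three pairwise meeting lightlike geodesics may form a nondegenerate null triangle, and one cannot tell from broken relations alone when two such relations correspond to the same interior breakpoint.

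To prove the lemma, let \(\gamma_1, \gamma_2, \gamma_3\) be the three geodesics with pairwise meetings \(p_{ij}\). The hypothesis that no two points are joined by two different lightlike geodesics forces that if any two of the \(p_{ij}\) coincide then so does the third (otherwise some \(\gamma_i \cap \gamma_j\) would contain two distinct points), so I suppose the \(p_{ij}\) pairwise distinct. Using the time-orientations, each pair \(\{p_{ik}, p_{jk}\}\) inherits a linear order on \(\gamma_k\); up to relabelling there are four combinatorial configurations. One of them chains up as a closed future-directed null curve \(p_{23} \to p_{12} \to p_{13} \to p_{23}\), contradicting strong causality. In each of the other three configurations, two distinct points lying on some \(\gamma_k\) are also connected by a broken null path along the other two \(\gamma\)'s. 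Distinct lightlike geodesics must meet nontangentially (parallel velocities at a shared point would force them to coincide, by uniqueness of geodesics), so the break is genuine and the broken path is not a reparameterized lightlike geodesic; the standard variational argument then makes its endpoints chronologically related, contradicting the achronality of \(\gamma_k\). All four configurations are thus impossible.

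Given the lemma, let \(\mathcal L^+ \subseteq \TdM\) denote the future-directed lightlike vectors. The unbroken data yields, for each \(\xi \in \mathcal L^+\), the set \(\eta(\xi)\) of terminal velocities of forward geodesic segments of \(\gamma_\xi\); then \(\gamma_{\xi_1}\) and \(\gamma_{\xi_2}\) share a point if and only if \((\xi_1, \eta)\) lies in the broken data for some \(\eta \in \eta(\xi_2)\). For non-parallel \(\xi_1, \xi_2 \in \mathcal L^+\) whose geodesics share a (necessarily unique) point \(p = \gamma_{\xi_1} \cap \gamma_{\xi_2}\), the lemma yields
\[
\{\xi \in \mathcal L^+ : \gamma_\xi \text{ meets both } \gamma_{\xi_1} \text{ and } \gamma_{\xi_2}\} = S_p^+,
\]
the set of future-directed lightlike vectors at \(\partial \M\) whose forward geodesics pass through \(p\). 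Interior meeting points are singled out by the combinatorial observation that \(p\) lies on \(\partial \M\) if and only if it is a boundary crossing of both geodesics, which happens precisely when the base of \(\xi_1\) or the base of some vector in \(\eta(\xi_1)\) agrees with the base of \(\xi_2\) or the base of some vector in \(\eta(\xi_2)\). All ingredients are intrinsic to the broken and unbroken scattering data and to the fiber bundle structure of \(\TdM \to \partial \M\), so the conformal bundle isomorphism in the hypothesis identifies the collection \(\{S_p^+ \cup (-S_p^+) : p \in \interior \M_1\}\) of \(\M_1\) with its \(\M_2\)-counterpart, yielding the proposition.
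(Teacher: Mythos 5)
Your proof takes essentially the same route as the paper's: characterize past sky shadows as the maximal sets whose elements pairwise have intersecting geodesics, with the load-bearing geometric fact being that three pairwise intersecting lightlike geodesics (under the no-refocusing and achronality hypotheses) must have a common intersection point. The paper invokes this fact in a single clause (\enquote{since the three geodesics are achronal}); you isolate it as a lemma and carry out the causality case analysis explicitly (closed causal curve from a cyclic ordering of the corners, chronological relation from a genuinely broken null path otherwise), which makes the argument more self-contained than the paper's version.

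One point worth flagging. You correctly notice a detail the paper passes over silently: a maximal $I$-set might a priori be the past sky shadow of a \emph{boundary} point, whereas the past sky shadow data is defined only over interior points. Your filter --- declaring the meeting point to be on $\partial\M$ precisely when a boundary crossing of $\gamma_{\xi_1}$ coincides with one of $\gamma_{\xi_2}$ --- is not quite tight: a lightlike geodesic can graze a timelike portion of $\partial\M$ tangentially without that point appearing as the base of $\xi$ or of any terminal velocity in $\eta(\xi)$, so a boundary meeting point could slip through undetected. A cleaner intrinsic criterion, which follows directly from the no-double-connection hypothesis, is that a maximal $I$-set $\Sigma$ arises from a boundary point $q$ if and only if $\Sigma$ contains two nonparallel vectors in a single fiber $T_q\partial\M$ of $\TdM$ (namely the full light cone at $q$), whereas a sky shadow of an interior point meets each fiber in at most a single line.
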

\begin{proof}
Let \(\M\) be one of the manifolds \(\M_1\) and \(\M_2\).
Let \(S\) denote the unbroken lightlike scattering data of \(M\), and let \(B\) denote the broken lightlike scattering data of \(M\). We begin by using \(B\) and \(S\) to determine if the future-directed geodesics starting with some lightlike vectors \(\xi_1\) and \(\xi_2\) intersect. This happens precisely when \((\xi_1, \eta_2) \in B\) and \((\xi_2, \eta_1) \in B\) where \(\eta_1\) and \(\eta_2\) are such that \((\xi_i, \eta_i) \in S\). Let us denote this relation by \(I\), and use the notation \(\xi_1 I \xi_2\).
We will show that \(\Sigma\) is a past sky shadow if and only if it is a maximal set with the property that \(\xi I \xi'\) for all \(\xi, \xi' \in \Sigma\).

Suppose first that \(\Sigma\) is the past sky shadow of some point \(q \in M\). Then the elements of \(\Sigma\) are precisely those vectors which give rise to geodesics through \(q\), so \(\xi I \xi'\) for all \(\xi, \xi' \in \Sigma\). There is no proper superset of \(\Sigma\) with this property, since an additional geodesic would have to intersect the others in points different from \(q\), which would contradict the achronality of at least one of the geodesics.

Conversely, suppose that \(\Sigma\) is of the form described above. Since \(\Sigma\) is maximal, we know that it contains at least two elements \(\xi_1\) and \(\xi_2\). The geodesics \(\gamma_i\) starting with \(\xi_i\) can intersect in at most one point by hypothesis. They must intersect in some point, since \(\xi_1 I \xi_2\). Let the point of intersection be \(q\). If \(\xi_3 I \xi_1\) and \(\xi_3 I \xi_2\) then the geodesic starting with \(\xi_3\) must pass through \(q\) since the three geodesics are achronal. Since \(\Sigma\) is maximal, it must be all of the past sky shadow of \(q\).
\end{proof}

\begin{thm}\label{thm:Broken lightlike scattering rigidity}
Let \((\M_1, g_1)\) and \((\M_2, g_2)\) be strongly causal, compact, time-oriented Lorentzian manifolds of dimension \(n \geq 3\) with boundary. Suppose that they are such that 
\begin{itemize}
\item no two points are connected by two different lightlike geodesics,
\item all lightlike geodesics are achronal,
\item no lightlike geodesic contains a pair of conjugate points, and
\item every point is the future endpoint of a past-inextendible timelike geodesic which intersects the boundary transversely.
\end{itemize}
Suppose that \(\M_1\) and \(\M_2\) have isomorphic broken and unbroken lightlike scattering data in the sense that \(\TdM_1 \to \partial \M_1\) and \(\TdM_2 \to \partial \M_2\) are conformal as smooth vector bundles with Lorentzian metrics, by a conformal vector bundle isomorphism which
\begin{itemize}
\item identifies the tangent bundles of the boundaries, that is identifies \(T\partial \M_1\) with \(T\partial \M_2\),
\item identifies the broken lightlike scattering data of \(\M_1\) with the broken lightlike scattering data of \(\M_2\),
\item identifies the unbroken lightlike scattering data of \(\M_1\) with the unbroken lightlike scattering data of \(\M_2\).
\end{itemize}
Then \(\M_1\) and \(\M_2\) are conformal.\\
In other words, the conformal class of any strongly causal, compact, time-oriented Lorentzian manifold of dimension \(n \geq 3\) with boundary with the above conditions on lightlike geodesics is uniquely determined by the broken lightlike scattering data together with the unbroken lightlike scattering data.
\end{thm}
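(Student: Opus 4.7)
The plan is to derive this theorem as a corollary of the two preceding results by straightforward composition: broken and unbroken lightlike scattering data yields past sky shadow data by \prettyref{prop:Broken lightlike scattering data determines past sky shadow data}, and past sky shadow data yields the conformal class by \prettyref{thm:Past sky shadow data determines conformal class}. The only nontrivial bookkeeping is checking the hypotheses.

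First I would verify the hypotheses of \prettyref{prop:Broken lightlike scattering data determines past sky shadow data}: strong causality, compactness, the condition that no two points are connected by two distinct lightlike geodesics, and the achronality of all lightlike geodesics are all explicit hypotheses of the present theorem. Applying the Proposition, the given conformal vector bundle isomorphism \(F \colon \TdM_1 \to \TdM_2\) (identifying the tangent bundles of the boundaries and the broken and unbroken lightlike scattering data) realizes at the same time an isomorphism of past sky shadow data, since the Proposition's construction of a past sky shadow as a maximal \(I\)-clique is expressed purely in terms of the scattering data and is therefore preserved by \(F\).

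Next I would apply \prettyref{thm:Past sky shadow data determines conformal class} to the isomorphism of past sky shadow data produced above. The conditions of strong causality, compactness, dimension \(n \geq 3\), and the no-conjugate-points hypothesis for lightlike geodesics are direct. The remaining condition is that every interior point lies on a past-inextendible lightlike geodesic which intersects the boundary transversely; the hypothesis of the present theorem instead gives this for a timelike geodesic. I would bridge the gap by a short approximation: at any interior point \(p\), take a past-directed unit timelike vector \(v \in T_p\M_i\) whose past-inextendible geodesic hits \(\partial \M_i\) transversely, and approximate \(v\) by past-directed lightlike vectors \(v_k \in T_p\M_i\) (which exist because the timelike cone has nonempty lightlike boundary and \(n \geq 3\) gives plenty of freedom). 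By \prettyref{lem:Smooth time to boundary}, transversality is an open condition on the initial vector once the geodesic reaches the boundary, so the geodesics from \(v_k\) also intersect \(\partial \M_i\) transversely for \(k\) large; by compactness and strong causality these geodesics do reach \(\partial \M_i\) in finite parameter time. With this, \prettyref{thm:Past sky shadow data determines conformal class} applies and yields that \(\M_1\) and \(\M_2\) are conformal.

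The main (and essentially only) obstacle is the mild approximation argument bridging the stated timelike transversality hypothesis and the lightlike transversality hypothesis required by \prettyref{thm:Past sky shadow data determines conformal class}; everything else is a direct invocation of previously established results.
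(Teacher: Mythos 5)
Your overall structure matches the paper's proof exactly: combine \prettyref{prop:Broken lightlike scattering data determines past sky shadow data} with \prettyref{thm:Past sky shadow data determines conformal class}. You also correctly noticed something that the paper's one-line proof does not address: \prettyref{thm:Past sky shadow data determines conformal class} requires that every interior point lie on a past-inextendible \emph{lightlike} geodesic intersecting the boundary transversely, whereas the hypothesis of the present theorem, as stated, only grants a \emph{timelike} such geodesic. Spotting this mismatch is a genuine and useful observation.

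However, the bridging argument you propose is incorrect. You cannot ``approximate $v$ by past-directed lightlike vectors $v_k$'' when $v$ is strictly timelike: the past-directed timelike vectors form an open cone in $T_p\M_i$ which is disjoint from the closed set of lightlike vectors, so no sequence of lightlike vectors converges to a strictly timelike $v$. Consequently the openness of $\oTM$ established in \prettyref{lem:Smooth time to boundary} cannot be applied the way you apply it --- that argument needs $v_k \to v$, which is impossible here. (The approximation works only in the opposite direction: a lightlike vector can be approximated from inside the cone by timelike ones, which is not what you need.) So the step you call a ``mild approximation argument'' is a real gap, not a bookkeeping detail. The paper's own proof is simply the two-line composition of the cited results and does not attempt this bridge; the most plausible reading is that the fourth bullet of the theorem's hypotheses should match the lightlike transversality condition of \prettyref{thm:Past sky shadow data determines conformal class}, under which both your argument and the paper's reduce to a direct composition.
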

\begin{proof}
Combine \prettyref{prop:Broken lightlike scattering data determines past sky shadow data} and \prettyref{thm:Past sky shadow data determines conformal class}.
\end{proof}

\appendix
\section{Some definitions}\label{app:Assorted definitions}
The following definitions can be found in~\cite[p. 53-54]{BeemEhrlich},~\cite[p. 59]{BeemEhrlich} and~\cite[Definition 11 in Chapter 14]{ONeill}, respectively. They are standard apart from that we have defined causal convexity and strong causality for manifolds with possibly nonempty boundary.
\begin{defn}
A \emph{convex neighborhood} in a Lorentzian manifold is a neighborhood such that any two points can be joined by a unique geodesic in the neighborhood. 
\end{defn}
\begin{defn}
A \emph{causally convex neighborhood} in a Lorentzian manifold with boundary is a neighborhood such that no causal curve intersects it twice. 
\end{defn}
\begin{defn}
A Lorentzian manifold with boundary is \emph{strongly causal} if for each point \(p\) and for each neighborhood \(U\) of \(p\), there is a causally convex neighborhood \(p \in V \subseteq U\).
Strong causality implies causality (that there are no closed causal curves) and is implied by global hyperbolicity. 
\end{defn}

\section{Null geometry}
\label{app:Null geometry}
For completeness, we summarize the properties of smooth null hypersurfaces which are used in the paper. As references for null geometry, we suggest~\cite{Galloway04} and~\cite{Kupeli87}.

\subsubsection*{Null hypersurfaces and null Weingarten maps}
A \emph{null hypersurface} is a hypersurface whose normal vector field is lightlike. The constructions in this appendix are local, so they work even if the hypersurfaces are only immersed instead of embedded. We will sometimes use the word \enquote{immersed} for emphasis. 
Let \(\mathcal H\) be a null hypersurface. There is no canonical choice of normal vector field on a null hypersurface, so fix one particular choice \(K\). The \emph{null Weingarten map} of \(\mathcal H\) with respect to the choice of normal vector field \(K\) is the map \(b \colon \qextraspace{T\mathcal H}{K} \to \qextraspace{T\mathcal H}{K}\) defined by letting \(b(X) = [\nabla_{\hat X} K]\) where \(\hat X\) is any representative of \(X \in \qextraspace{T\mathcal H}{K}\). The brackets denote taking equivalence classes. A computation shows that \(b(X)\) does not depend on the representative \(\hat X\) chosen. Perhaps more surprisingly, \(b(X)\) evaluated at a point \(p\) does not depend on the vector field \(K\) outside of \(p\). If \(K_p = \eta\), we use \(b_\eta\) to denote the null Weingarten map with respect to \(K\) evaluated at \(p\). It can easily be seen that \(b_{\lambda \eta} = \lambda b_\eta\) for \(\lambda \neq 0\). In other words, rescaling the normal vector field by \(\lambda \neq 0\) rescales \(b\) by \(\lambda\).

The null Weingarten map does not depend on the behavior of the null hypersurface in the null direction, in the sense that it can be computed from a spacelike slice of the hypersurface together with a lightlike normal vector field along the slice.

When \(\eta\) is a lightlike vector we use \(\R\eta\) and \(\eta^\perp\) to denote the spaces of vectors which are tangent to \(\eta\) and orthogonal to \(\eta\), respectively. Since \(\eta\) is lightlike we have \(\R\eta \subset \eta^\perp\), and we denote the vector space quotient of these spaces by \(\qextraspace{\eta^\perp}{\eta}\).

Similarly, when \(\gamma\) is an immersed curve we use the notation \(T\gamma\) for the tangent bundle of \(\gamma\) (viewed as a vector bundle along \(\gamma\)), and \(T^\perp\gamma\) for the normal bundle of \(\gamma\). If \(\gamma\) is a lightlike geodesic contained in a null hypersurface \(\mathcal H\), then \(T\mathcal H = T^\perp\gamma\) so the null Weingarten map of \(\mathcal H\) is an endomorphism on \(\qextraspace{T^\perp\gamma}{T\gamma}\).

\subsubsection*{The optical equation}
Let \(\mathcal H\) be an immersed null hypersurface and let \(\gamma\) be a lightlike geodesic taking its values in \(\mathcal H\). Then the null Weingarten map \(b_{\dot\gamma(t)}\) along \(\gamma\) satisfies the Riccati equation
\[b_{\dot\gamma(t)}' = -b_{\dot\gamma(t)}^2 - R_{\dot\gamma(t)}\]
where \(R_Y\) is the \((1,1)\)-tensor on \(\qextraspace{T^\perp\gamma}{T\gamma}\) constructed from the Riemann curvature tensor \(\Riem\) by
\[R([X]) = [\Riem(X, Y, Y)].\]
The derivative \(b_{\dot\gamma(t)}'\) is the Levi-Civita covariant derivative of \(b_{\dot\gamma(t)}\) in direction \(\dot\gamma(t)\), which is well-defined even after taking equivalence classes.
Following~\cite{CDGH01}, we call the equation the \emph{optical equation}.

\subsection*{Acknowledgements}
I would like to thank Mattias Dahl and Matti Lassas for many helpful discussions and advice concerning this project.
In addition, I would like to thank Institut Henri Poincar\'e for providing support and a stimulating working environment during the Mathematical General Relativity Trimester 2015, where this work was finished. 

\bibliographystyle{abbrv}
\bibliography{References}

\end{document}